\theoremstyle{plain}
\newtheorem{theorem}{Theorem}[section]
\newtheorem{corollary}[theorem]{Corollary}
\newtheorem{lemma}[theorem]{Lemma}
\newtheorem{proposition}[theorem]{Proposition}
\newtheorem{assumption}[theorem]{Assumption}
\newtheorem{example}[theorem]{Example}
\numberwithin{equation}{section}
\theoremstyle{remark}
\newtheorem{remark}{Remark}
\theoremstyle{definition}
\newtheorem{definition}{Definition}
\newcommand\stwoscale{\overset{_2}{\rightarrow}}
\newcommand{\ep}{\varepsilon}
\newcommand{\beq}{\begin{equation}}
\newcommand{\eeq}{\end{equation}}
\newcommand{\beqs}{\begin{equation*}}
\newcommand{\eeqs}{\end{equation*}}
\newcommand{\bit}{\begin{itemize}}
\newcommand{\eit}{\end{itemize}}
\newcommand{\ben}{\begin{enumerate}}
\newcommand{\een}{\end{enumerate}}
\newcommand{\bal}{\begin{align}}
\newcommand{\eal}{\end{align}}
\newcommand{\bals}{\begin{align*}}
\newcommand{\eals}{\end{align*}}
\newcommand{\bse}{\begin{subequations}}
\newcommand{\ese}{\end{subequations}}
\newcommand{\bpr}{\begin{proposition}}
\newcommand{\epr}{\end{proposition}}
\newcommand{\bre}{\begin{remark}}
\newcommand{\ere}{\end{remark}}
\newcommand{\bpf}{\begin{proof}}
\newcommand{\epf}{\end{proof}}
\newcommand{\ble}{\begin{lemma}}
\newcommand{\ele}{\end{lemma}}
\newcommand{\bco}{\begin{corollary}}
\newcommand{\eco}{\end{corollary}}
\newcommand{\bex}{\begin{example}}
\newcommand{\eex}{\end{example}}
\newcommand{\bth}{\begin{theorem}}
\newcommand{\enth}{\end{theorem}}
\newcommand{\eps}{\varepsilon}
\def\XXint#1#2#3{{\setbox0=\hbox{$#1{#2#3}{\int}$}
     \vcenter{\hbox{$#2#3$}}\kern-.5\wd0}}
\begin{document}

%\jvol{00} \jnum{00} \jyear{2014} \jmonth{March}

%\articletype{}

\title{\itshape Two-scale homogenization for a general class of high contrast PDE systems with periodic coefficients}

\author{\vspace{6pt}I.V. Kamotski\footnote{Email: i.kamotski@ucl.ac.uk}\ %$^{\ast}$
%\thanks{$^\ast$Corresponding author. Email: v.smyshlyaev@ucl.ac.uk
 and V.P. Smyshlyaev\footnote{ Email: v.smyshlyaev@ucl.ac.uk }
\\\vspace{0pt}  {\em{Department of Mathematics, University College London,}}%;
\\\vspace{0pt}  {\em{ Gower Street, London, WC1E 6BT, UK}}
%$^{b}${\em{Department of Mathematics, University College London, Gower Street, London, WC1E 6BT, UK}}
%\\\received{23 October 2017}
 }
\date{}
\maketitle

\begin{abstract}
For two-scale homogenization of a general class of asymptotically degenerating %uniformly 
strongly elliptic symmetric PDE systems with a critically scaled high contrast in periodic 
coefficients of a small period $\ep$, we derive a two-scale limit resolvent problem under a single generic  
decomposition assumption for the `stiff' part. We show that this key assumption does hold for a large number 
of examples with a high contrast, both studied before and some recent ones, including those in linear elasticity and electromagnetism. Following ideas of V.V. Zhikov, under very mild restrictions on the regularity of the domain $\Omega$ we prove that the limit resolvent problem is well-posed and turns out to be a pseudo-resolvent problem for a well-defined non-negative self-adjoint two-scale limit operator. A key novel technical ingredient here 
is a proof that the linear span of product test functions in the functional spaces corresponding to the degeneracies is dense in associated two-scale energy space for a general coupling between the scales. 
As a result, we establish (both weak and strong) two-scale resolvent convergence, as well as some of its further implications for the spectral convergence and 
for convergence of parabolic and hyperbolic semigroups and of associated time-dependent initial boundary 
value problems.

% A general theory is proposed for two-scale homogenization of a class of high contrast PDE systems. 

%\begin{keywords} High contrast homogenization, two-scale convergence, operator convergence, resolvent convergence, convergence of semigroups, homogenization of high-contrast evolution problems. 
%\end{keywords}

%\begin{classcode}35B27; 47A10 
%\end{classcode}
\end{abstract}
\section{Introduction}

This work is dedicated to memory of Professor V. V. Zhikov. One of V.V. Zhikov's many important contributions was 
development of a powerful operator theoretic and spectral approach for two-scale convergence and its application to 
double-porosity type models, see e.g. \cite{Zhikov2000,Zhikov2005,ZhP07,ZhP13}. The latter models are 
examples of high-contrast homogenization problems where the solutions behave non-classically in the sense that 
they retain  a two-scale pattern in their limit asymptotic behaviour. This is a source of a number of interesting 
physical effects, and a reason for applying and developing non-classical mathematical tools for analysis 
of such problems. 

In this context, simplest double-porosity type models are divergence form partial differential equations 
(PDEs) with 
$\ep$-periodic coefficients and with a high contrast between the coefficients for the `stiff' and `soft' phases. 
It has been observed starting from at least \cite{FenKhr80} that for a critically scaled contrast $\delta$, 
$\delta=O(\ep^2)$, the solutions' asymptotic behavior may display various interesting effects.  It was 
shown in \cite{ADH90} that certain macroscopic porous media flow models can be derived as two-scale homogenized limits of two-component Darcy flows with $O(\ep^2)$-contrasting properties. Various classes of $O(\ep^2)$ 
high-contrast homogenization problems were studied since. Without attempting here a comprehensive review, 
such problems and related physical effects and mathematical issues have since been intensively studied among others in \cite{AurBonn85,Khrus,panasenko,allaire, Sandr99,Zhikov2000,Zhikov2005,BF04,CSZ06,BKS08, KohnShip08, AGMR08, 
Cherd09, VPS09, bellieud,cooperPhD,ZhP13,ChenLip13,Cooper14,CKS14,CC15,BBF17}. 

As probably first observed by Khruslov, see e.g. \cite{Khrus}, homogenization of certain parabolic problems 
with high contrast leads to weakly coupled systems with memory, i.e. with a non-locality in time. Zhikov 
\cite{Zhikov2000,Zhikov2005} analyzed such models using tools of two-scale operator convergence, which 
not only confirmed the time non-locality but for particular cases also established the spectral convergence and 
existence of frequency band gaps. Such a non-local behavior as well as the asymptotic description of the 
band gaps appear closely related to the so-called `negative' materials where, for certain ranges of 
frequencies, some materials with an $\ep^2$-contrast behave as if they had certain  macroscopic properties negative-valued, which 
was first observed  probably by Auriault and Bonnet \cite{AurBonn85} by formal asymptotics, and 
followed by mathematical analysis of related diffraction problems in \cite{BF04}, see e.g. \cite{BBF17}. 
As was 
shown in \cite{CSZ06}, appropriately modified models may display a spatial non-locality 
by introducing not only high contrast but also a high anisotropy which may be viewed as a particular case 
of a `partial degeneracy' where some components of the `stiffness' matrix remain of order one while others 
asymptotically degenerate and are of order $\ep^2$. Spatial non-locality appears a generic feature 
for certain classes of high-contrast media, see e.g. \cite{CamEddSepp03}, and appears also a generic property  under `ensemble averaging' 
of composite materials \cite{MBW06}. 
It was further shown in \cite{VPS09} by formal 
asymptotics that in linear elastic context $O(\ep^2)$ partial degeneracies may be capable of leading to some 
sort of combined spatio-temporal non-locality. A particular such model of partial degeneracy where 
isolated elastic inclusions have order $\ep^2$ shear modulus but order one bulk modulus was the rigorously 
analyzed in \cite{Cooper14}. As shown in \cite{CKS14}, for a photonic crystal fiber type waveguide 
structure with an `almost critical' wave propagation constant along the fibers, the problem can be reduced 
to another `partially degenerating' one. Analysis of fully three-dimensional Maxwell's systems with 
high contrast in electric permittivity, cf \cite{CC15, BBF17}, appears also to display a kind of partial 
degeneracy due to intrinsic degeneracies of the Maxwell's system. 

The above background, and in particular the increasing list of examples of high contrast models with 
partial degeneracies and of associated additional effects, motivates an attempt to analyze such problems 
mathematically 
in a general setting, as we undertake in the present work. With this aim, we consider a general class of 
strongly elliptic symmetric PDE systems with $\ep$-periodic coefficients having a most general order-$\ep^2$
degeneration in their coefficients, i.e. without necessarily any separate stiff and soft phases at all, see \eqref{aeps}. 

As Zhikov has demonstrated, see e.g. \cite{Zhikov2000}, and as then 
further clarified by Zhikov and Pastukhova \cite{ZhP07,Past05}, analysis of convergence for associated 
resolvent problem is fundamental for operator and spectral convergences as well as for convergences of 
associated semigroups and of related time dependent evolution problems, both parabolic and hyperbolic. 
We therefore thoroughly analyze the associated general resolvent problem \eqref{pde}. Under generic 
conditions \eqref{rhobd}--\eqref{coerc1} for symmetry, boundedness and strong ellipticity, we employ 
the tools of two-scale convergence \cite{nguetseng,allaire,Zhikov2000} to pass to the (two-scale) 
limit in \eqref{pde}. To achieve this, we introduce a single generic decomposition
assumption \eqref{keyass} for the `stiff' part $a^{(1)}(y)$ and show that this assumption does hold for a large number
of examples involving an $\ep^2$-contrast, both studied before and some recent ones. 
A curious observation %and indeed a hint to us in the first place for stating the key assumption in the form \eqref{keyass}, 
is that %the key assumption 
for the particular case of constant $a^{(1)}$,   
\eqref{keyass} appears to be equivalent to a `constant rank' assumption for $a^{(1)}$, with a similar 
assumption implying a similar key % playing a key role for  
decomposition property in the 
$ {\cal A}$-quasiconvexity theory of Fonseca and M\"{u}ller \cite{FM} ensuring lower semi-continuity of a wide class of variational functionals 
subject to differential constraint $ {\cal A}v:=a^{(1)}\nabla v=0$.

We then show that 
the above key decomposition assumption implies a generalization of Weyl's decomposition (Theorem \ref{weyldec}), and 
allows to develop some form of generalized two-scale coupled corrector problem and of 
associated relation between the two-scale limit fields and fluxes, see \eqref{fluxcorr}. 
This in turn allows to pass to the limit in the variational formulation \eqref{weak} of \eqref{pde} 
for appropriate product
test functions in the functional spaces corresponding to the degeneracies, see \eqref{limweak}. 
This determines a limit two-scale operator form, and one of the main novel technical ingredients of this 
work is a proof that, under very mild restrictions on the
regularity of the domain $\Omega$ (see Remark \ref{remepigr}), 
 the linear span of the product
test functions  is dense in associated
two-scale energy space $U$ for a general coupling between the scales, Theorem \ref{cinfdense}. 

The above allows to pass to the limit in \eqref{weak}, which leads to a well-posed two scale problem, 
Theorem \ref{maintheor}. This has numerous further implications: a well-defined  limit operator 
$A_0$ as a two-scale non-negative self-adjoint operator in a Hilbert space $H_0\subset L^2(\Omega\times Q)$ 
where $Q$ is the unit cell, and ensued interpretation of Theorem \ref{maintheor} in terms of a weak 
two-scale (pseudo-)resolvent convergence, Corollary \ref{w2src2}, cf \cite{Zhikov2000,ZhP07,Past05}. 
This implies associated strong two-scale resolvent convergence (Theorem \ref{s2src}), 
which has in turn subsequent implications
for the spectral convergence (Corollaries \ref{h1spec} and \ref{specproj}) and for convergence of parabolic and hyperbolic semigroups and
of associated time-dependent initial boundary value problems (Theorems \ref{parsem} and 
\ref{hyperbcauchy}).

%The paper is organized as follows. 

\section{Formulation}

\subsection{Resolvent problem for high contrast PDE systems}

We consider the following general resolvent-type boundary value problem in domain $\Omega \subset {\mathbb R}^d$, $d\geq 1$
\begin{equation}
%A^\ep u\,+\,\lambda u\,:=\,
-\,\,\mbox{div} \Bigl(\, a^\varepsilon(x) {\nabla} u\, \Bigr)\,+\,\lambda\, \rho^\varepsilon(x)\, u\,=\,
\rho^\ep(x) f^\varepsilon(x).
\label{pde}
\end{equation}
The domain $\Omega$ can a priori be any open set in ${\mathbb R}^d$, both bounded or unbounded (in
particular $\Omega={\mathbb R}^d$). Here $u \in
\left(H^1_0(\Omega)\right)^n$, $n\geq 1$, is the sought
(possibly vector-valued) function, $\lambda>0$ is a real
positive (spectral) parameter, $0<\varepsilon<1$ is a small
parameter. The right hand side $f^\varepsilon\in\left(L^2(\Omega)\right)^n$ is generally assumed uniformly  bounded in
$\left(L^2(\Omega)\right)^n$ with respect to $\varepsilon$. 
% In particular, it can be of a two-scale form: $f^\varepsilon(x)=f\left(x, \frac{x}{\varepsilon}\right)$
% where $f(x,y)$ is a sufficiently regular two-scale function $f(x,y)$ periodic in the ``fast variable'' $y$ with periodicity cell $Q=[0,1]^d$.

The density $\rho^\varepsilon(x)$ is assumed to be in general a bounded and uniformly positive $\varepsilon$-periodic symmetric matrix:
\begin{eqnarray}
 \rho^\varepsilon(x)\,=\,\rho\left(\frac{x}{\varepsilon}\right), \ \ \rho(y)\in \left(L_{\#}^\infty(Q)\right)^{n\times n}, \ \ 
\rho_{ij}(y)=\rho_{ji}(y), \ \nonumber\\ 
\rho_{ij}(y)\xi_i\xi_j\geq \nu\vert\xi\vert^2, \ \  \nu>0, \,\forall \xi\in {\mathbb R}^n, \ 
 \mbox{for a.e. } y \in Q,
\label{rhobd}
\end{eqnarray}
where the unit cube $Q=[0,1)^d$ is the periodicity cell of the `fast variable' $y\in {\mathbb R}^d$. 
In (\ref{rhobd}) and henceforth summation is  implied with respect to repeated indices, 
$L^\infty_\#(Q)$ denotes functions from $L^\infty\left({\mathbb R}^d\right)$ which are $Q$-periodic.

The rapidly oscillating %constitutive 
tensor $a^\varepsilon(x)$ is allowed to degenerate as $\varepsilon \to 0$, as follows:
\begin{equation}
a^\varepsilon(x)\,=\,a^{(1)}\left(\frac{x}{\varepsilon}\right)\,+\,
\varepsilon^2\,a^{(0)}\left(\frac{x}{\varepsilon}\right),
\label{aeps}
\end{equation}
where
\begin{equation}
a^{(l)}(y) \in \left(L^\infty_\#(Q)\right)^{n\times d \times n \times d}, \ \ \ l=0,1, 
\label{alinfty}
\end{equation}
are symmetric:
\begin{eqnarray}
 a^{(l)}(y)\,=\,\left(a^{(l)}_{ijpq}(y)\right), \,\,\, 1\leq i,p \leq n, \,\,\, 1\leq j,q \leq d, \ \ 
\nonumber\\ 
 a^{(l)}_{ijpq}(y)\,=\, a^{(l)}_{pqij}(y), \,\, \forall i,j,p,q, \mbox{ for a.e. }y\in Q.
\label{symm}
\end{eqnarray}
The tensor $a^{(1)}$ is further assumed to be non-negative, i.e.
\begin{equation}
a^{(1)}_{ijpq}(y)\zeta_{ij}\zeta_{pq} \,\geq\,0, \ \ \ 
\forall \zeta \in {\mathbb R}^{n\times d}, \mbox{ for a.e. }y\in Q.
\label{nonneg}
\end{equation}
% In (\ref{nonneg}) and henceforth summation is  implied with respect to repeated indices.

The tensor $a^{(0)}$ is in turn assumed to be such that $a^{(0)}(y)+a^{(1)}(y)$ is
strongly uniformly elliptic, in the sense that %, 
%as a quadratic form, it is bounded from below by a constant uniformly coercive tensor $A^{(0)}$:
\begin{equation}
\int_{{\mathbb R}^{d}} \biggl[ a^{(1)}\left(y\right) {\nabla} u(y)\cdot \nabla u(y)+
 a^{(0)}\left(y\right) {\nabla} u(y)\cdot \nabla u(y) \biggr]dy\geq \nu \| \nabla u\|^2_{(L^2({\mathbb R}^{d}))^{n\times d}},  
\ \forall u\in \left(H^1({\mathbb R}^{d})\right)^n, 
\label{coerc1}
\end{equation}
%\begin{eqnarray}
%\left(a^{(0)}_{ijpq}(y)\,+\,a^{(1)}_{ijpq}(y)\right)\zeta_{ij}\zeta_{pq} &\geq&A^{(0)}_{ijpq}\zeta_{ij}\zeta_{pq}, \ \ \
%\forall \zeta \in {\mathbb R}^{n\times d},
%\label{coerc1} \\
%\end{equation}
%\begin{equation}
%A^{(0)}_{ijpq}\xi_i\eta_j\xi_p\eta_q &\geq&\nu |\xi|^2|\eta|^2, \ \ \
%\forall \xi\in{\mathbb R}^n, \eta \in {\mathbb R}^{d},
%\label{coerc2}
%\end{eqnarray}
with some %coercivity 
constant $\nu>0$ independent of  $u$. We remark that while \eqref{coerc1} seems 
the most general condition of strong ellipticity for $a^{(0)}(y)+a^{(1)}(y)$, the condition \eqref{nonneg} of non-negativity for $a^{(1)}(y)$ may be slightly restrictive: for example, for constant $a^{(1)}$ the condition 
ensuring (in the absence of $a^{(0)}$) \eqref{coerc1} would be 
$a^{(1)}_{ijpq}\xi_i\eta_j\xi_p\eta_q\geq \nu |\xi|^2|\eta|^2$, $\forall \xi\in{\mathbb R}^n,
\eta\in{\mathbb R}^d$, which does not generally imply \eqref{nonneg}. 
However as we illustrate in Section \ref{keyassexamples}, condition \eqref{nonneg} which is essential for the present method, 
appears to hold for numerous systems from physics, notably from linear elasticity and electromagnetism. 

 For a fixed $\varepsilon >0$, for any $\lambda>0$ 
the boundary value problem (\ref{pde}) admits an equivalent weak formulation as follows: 
find $u\in \left(H_0^1(\Omega)\right)^n$ such that 
\begin{eqnarray}
\int_\Omega \biggl[ a^{(1)}\left(\frac{x}{\varepsilon}\right)  {\nabla} u\cdot
\nabla\phi(x)&+&\varepsilon^2 a^{(0)}\left(\frac{x}{\varepsilon}\right)  {\nabla} u\cdot
\nabla\phi(x)\,+ \ \ \ \ \ \ 
\nonumber\\ 
\ \ \ \ \ \ \ \ \ \ \lambda \rho\left(\frac{x}{\varepsilon}\right) u\cdot \phi
(x) \biggr]dx\,&=& \int_\Omega \rho\left(\frac{x}{\ep}\right)f^\varepsilon(x)\cdot
\phi(x)\,dx,  \ \ \ \forall \phi\in\,\left(
H^1_0(\Omega)\right)^n. \label{weak}
\end{eqnarray}

For any fixed positive  $\varepsilon$ and $\lambda$, the conditions (\ref{rhobd})--(\ref{coerc1}) 
immediately ensure applicability of standard theory, with 
Lax-Milgram lemma, see e.g. \cite{evans}, guaranteeing the existence of a 
%uniform ellipticity of boundary value problem (\ref{pde}) for any fixed positive $\varepsilon$. By standard ellipticity theory, see e.g. \cite{evans}, it admits a 
unique solution in $\left( H_0^1(\Omega)\right)^n$, denoted $u^\varepsilon$.

Problem \eqref{weak} can be regarded, in a standard way, as a resolvent problem for a non-negative self-adjoint operator $A_\ep$ as follows. Consider Hilbert space $H_\ep=\left(L^2(\Omega)\right)^n$ with inner product 
$(u,v)_{H_\ep}:=\int_\Omega u(x)\cdot\rho^\ep(x)v(x)dx$. Then the sesquilinear form  defined on 
$V=\left(H_0^1(\Omega)\right)^n$ by the left hand side of \eqref{weak} with $\lambda=1$, due to 
\eqref{rhobd}--\eqref{coerc1}, defines an equivalent inner product in $\left(H_0^1(\Omega)\right)^n$ and is 
hence densely defined in $H_\ep$, non-negative and closed. It therefore defines a non-negative self-adjoint 
operator $A_\ep$ with a domain $D(A_\ep)$ dense in $V$. This recasts \eqref{pde}, equivalently \eqref{weak}, 
as a resolvent problem in $H_\ep$: 
\beq
\left(\,A_\ep\,+\,\lambda I\right)u_\ep\,=\,f_\ep \ \  \Longleftrightarrow \ \ 
u_\ep\,=\,\left(\,A_\ep\,+\,\lambda I\right)^{-1}f_\ep, 
\label{resolveps}
\eeq
with $I$ denoting the identity operator. 

The interest is in establishing a version of `resolvent convergence', i.e. for any $\lambda>0$ in passing to an appropriate limit, as $\varepsilon \to 0$, for $u^\varepsilon$ whenever 
$f^\ep$ converges to some $f_0$ (in an appropriate sense). 

\subsection{Basic definitions and properties of two-scale convergence} 

For passing to the limit in \eqref{weak} we employ traditional recipes of two-scale convergence, 
see e.g. \cite{nguetseng,allaire, Zhikov2000}. We list below some basic definitions and properties of 
the two-scale convergence, in a form closest to Zhikov see e.g. \cite{Zhikov2000,ZhP07} as 
adapted to our context.  

We will denote by $C_0^\infty(\Omega)$ and $C_\#^\infty(Q)$ the linear spaces of all (test) functions which are infinitely differentiable, and respectively compactly supported in domain $\Omega$ and $Q$-periodic in 
${\mathbb R}^d$. 
For an arbitrary open domain $\Omega$ in ${\mathbb R}^d$, a bounded sequence $\{u^\ep(x)\}$ in %\left(
$L^2(\Omega)$
%\right)^n$ 
is said to {\it weakly two-scale converge} to a function $u(x,y)$ in $L^2(\Omega\times Q)$, 
denoted $u^\varepsilon(x)\stackrel{2}\rightharpoonup u(x,y) $, if 
\beq
\lim_{\ep\to 0}\int_\Omega u^\ep(x)\phi(x)b\left(\frac{x}{\ep}\right)dx=
\int_\Omega\int_Q u(x,y)\phi(x)b(y)dxdy,  \ \ \forall\phi(x)\in C_0^\infty(\Omega), b(y)\in C^\infty_\#(Q). 
\label{w2scdef}
\eeq
The weak two-scale limit is unique since the linear span of $\phi(x)b(y)$, 
$\phi(x)\in C_0^\infty(\Omega), b(y)\in C^\infty_\#(Q)$, is dense in $L^2(\Omega\times Q)$. 
The sequence is said to {\it strongly two-scale converge} to $u(x,y)\in L^2(\Omega\times Q)$, 
denoted $u^\varepsilon(x)\stackrel{2}\rightarrow u(x,y)$, if 
\beq
\lim_{\ep\to 0}\int_\Omega u^\ep(x)v^\ep(x)dx\,=\,
\int_\Omega\int_Q u(x,y)v(x,y)dx\,dy \ \ \ \mbox{whenever } v^\varepsilon(x)\stackrel{2}\rightharpoonup v(x,y). 
\label{s2scdef}
\eeq
We will recall a key compactness property of weak two-scale convergence: every bounded sequence $u^\ep$ in 
$L^2(\Omega)$ has a subsequence which weakly two-scale converges to some $u(x,y)\in L^2(\Omega\times Q)$. 
Another simple property of the two-scale convergence on which we will rely is that if 
$u^\varepsilon(x)\stackrel{2}\rightharpoonup u(x,y) $ 
(respectively $u^\varepsilon(x)\stackrel{2}\rightarrow u(x,y) $) 
and $b(y)\in L_\#^\infty(Q)$ then 
$b(x/\ep)u^\varepsilon(x)\stackrel{2}\rightharpoonup b(y)u(x,y)$ (resp 
$b(x/\ep)u^\varepsilon(x)\stackrel{2}\rightarrow b(y)u(x,y)$). 
For  $u^\varepsilon(x)\stackrel{2}\rightharpoonup u(x,y)$, 
\beq
\liminf_{\ep\to 0}\|u^\ep(x)\|_{L^2(\Omega)}\geq\|u(x,y)\|_{L^2(\Omega\times Q)}, 
\label{liminf}
\eeq
and strong two-scale convergence is equivalent to weak two-scale convergence 
in conjunction with the convergence of the 
norms: 
\beq 
u^\varepsilon(x)\stackrel{2}\rightarrow u(x,y) \, \, \Longleftrightarrow \, \,
u^\varepsilon(x)\stackrel{2}\rightharpoonup u(x,y) \ \mbox{and } 
 \lim_{\ep\to 0}\|u^\ep(x)\|_{L^2(\Omega)}=\|u(x,y)\|_{L^2(\Omega\times Q)}. 
\label{weaknormstrong}
\eeq
%, and conversely if $u^\varepsilon(x)\stackrel{2}\rightharpoonup u(x,y) $ 
The strong two-scale convergence implies, assuming sufficient regularity of $u_0(x,y)$ e.g. 
$u_0\in L^2(\Omega; C_\#(Q))$ (\cite{allaire} Theorem 1.8), that 
$\left\| u^\ep(x)-u_0\left(x,x/\ep\right)\right\|_{L^2(\Omega)} \to 0$ as ${\ep\to 0}$. 
For further properties of two-scale convergence, see e.g. \cite{nguetseng,allaire, Zhikov2000,ZhP07}.

The above is immediately extended to vector or tensor-valued functions, in the component-wise sense. For example, 
we regard a matrix-valued $\xi^\ep(x)=\left\{\xi^\ep_{ij}(x)\right\}, 1\leq i\leq n$, $1\leq j\leq d$ to 
weakly (strongly) two-scale converge to $\xi^0(x,y)=\left\{\xi^0_{ij}(x,y)\right\}$ if simply the above 
definitions hold for every $i$ and $j$. Remark also that, due to \eqref{weak} associated with the resolvent problem 
\eqref{resolveps} in Hilbert space $H_\ep$ generally with (matrix) weights $\rho^\ep$, it could be natural to 
operate with two-scale convergence with respect to a (matrix) measure $\mu_\ep=\rho^\ep dx$, cf. e.g. 
\cite{Zhikov2000}. This however appears not necessary for the purposes of the present work, due to the imposed 
in \eqref{rhobd} uniform positivity and boundedness of $\rho^\ep$: as a result, the two notions of two-scale convergence are equivalent.

%all the resulting ``classical'' two-scale convergences for \eqref{weak} 
%appear sufficient for our purposes, 
%can be a posteriori recast in a form consistent with the required operator convergences, 
%see Section %\ref{2sclop} 
%\ref{w2scimpl} below. 

\section{A priori estimates and functional spaces for two-scale limits.}

\subsection{A priori estimates} 

In this subsection, for a fixed $\lambda>0$, we derive in a standard way a priori estimates for the solution $u^\varepsilon$ of \eqref{weak}. 
Henceforth $C$ denotes a positive constant, independent of $\ep$ and $f^\ep$, 
 whose precise value is insignificant and can change from line to line;  
%The argument of $C$,
%if any, displays the constant's possible dependence on appropriate parameters; % related to the original problem.
$\|\cdot\|_2$ denotes appropriate $L^2$-norm. 

\begin{lemma}
For $0<\ep<1/2$, the following a priori estimates hold:
\begin{eqnarray}
\| u^\varepsilon \|_2 &\leq & C \| f^\varepsilon \|_2, \label{apest1} \\
\| \varepsilon \nabla u^\varepsilon \|_2 &\leq & C \| f^\varepsilon \|_2, \label{apest2} \\
\left\| \bigl( a^{(1)}(x/\varepsilon)\bigr)^{1/2} \nabla u^\varepsilon \right\|_2 &\leq & C 
\| f^\varepsilon \|_2, \label{apest3} 
\end{eqnarray}
with a constant $C$ independent of $\varepsilon$ and $f^\ep$. 
\end{lemma}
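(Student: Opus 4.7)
The plan is to test the weak formulation (\ref{weak}) with $\phi=u^\varepsilon$, exploiting a well-chosen convex combination of the two building blocks of $a^\varepsilon$ so that both the non-negative tensor $a^{(1)}$ and the coercive combination $a^{(0)}+a^{(1)}$ can be used simultaneously. Concretely, for $\varepsilon\le 1$ I would write
\begin{equation*}
a^{(1)}(y)+\varepsilon^2 a^{(0)}(y)\;=\;(1-\varepsilon^2)\,a^{(1)}(y)\;+\;\varepsilon^2\bigl(a^{(0)}(y)+a^{(1)}(y)\bigr),
\end{equation*}
so that testing (\ref{weak}) with $\phi=u^\varepsilon$ and inserting this splitting gives
\begin{equation*}
(1-\varepsilon^2)\!\int_\Omega\! a^{(1)}(x/\varepsilon)\nabla u^\varepsilon\!\cdot\!\nabla u^\varepsilon\,dx\;+\;\varepsilon^2\!\int_\Omega\!(a^{(0)}+a^{(1)})(x/\varepsilon)\nabla u^\varepsilon\!\cdot\!\nabla u^\varepsilon\,dx\;+\;\lambda\!\int_\Omega\!\rho^\varepsilon|u^\varepsilon|^2\,dx\;=\;\int_\Omega\! f^\varepsilon\!\cdot\! u^\varepsilon\,dx.
\end{equation*}
By (\ref{nonneg}) the first integral is non-negative, and by (\ref{rhobd}) the third is bounded below by $\lambda\nu\|u^\varepsilon\|_2^2$. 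The RHS is controlled by Cauchy--Schwarz and Young's inequality, $|\int f^\varepsilon\cdot u^\varepsilon|\le\tfrac{\lambda\nu}{2}\|u^\varepsilon\|_2^2+\tfrac{1}{2\lambda\nu}\|f^\varepsilon\|_2^2$, and the first term is absorbed into the LHS. This already yields (\ref{apest1}) and, by retaining instead the first term, also (\ref{apest3}).

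To extract (\ref{apest2}) I need the middle integral to control $\varepsilon^2\|\nabla u^\varepsilon\|_2^2$. By (\ref{coerc1}) it is bounded below by $\varepsilon^2\int_\Omega A^{(0)}\nabla u^\varepsilon\!\cdot\!\nabla u^\varepsilon\,dx$, so the task reduces to the constant-coefficient estimate
\begin{equation*}
\int_\Omega A^{(0)}\nabla v\!\cdot\!\nabla v\,dx\;\ge\;\nu\,\|\nabla v\|_2^2,\qquad \forall\,v\in (H^1_0(\Omega))^n.
\end{equation*}
Extending $v$ by zero to $\mathbb{R}^d$ (permissible since $v\in H^1_0$) and Plancherel convert the LHS into $\int_{\mathbb{R}^d} A^{(0)}_{ijpq}\eta_j\eta_q\hat v_i\overline{\hat v_p}\,d\eta$; the symmetry (\ref{symm}) makes the imaginary part vanish, while (\ref{coerc2}) applied separately to $\mathrm{Re}\,\hat v$ and $\mathrm{Im}\,\hat v$ with the choice $\xi=\mathrm{Re}\,\hat v,\mathrm{Im}\,\hat v$, $\eta=\eta$ gives the pointwise-in-Fourier bound $\nu|\eta|^2|\hat v|^2$, and another Plancherel closes the estimate.

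The main conceptual obstacle is precisely this last point: condition (\ref{coerc2}) is only a Legendre--Hadamard (rank-one) condition and not the stronger pointwise ellipticity $A^{(0)}\zeta\!\cdot\!\zeta\ge\nu|\zeta|^2$, so one cannot simply invoke pointwise positivity of the integrand. The Fourier-side argument above works because $A^{(0)}$ is \emph{constant} and $u^\varepsilon$ lives in $H^1_0(\Omega)$; otherwise one would only get Gårding-type inequalities, which would be too weak here. Combining all three bounds, which all share the factor $C(\lambda,\nu)=\max\{(\lambda\nu)^{-1},(\lambda\nu^2)^{-1/2},(\lambda\nu)^{-1/2}\}$, then delivers (\ref{apest1})--(\ref{apest3}).
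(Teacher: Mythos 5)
Your argument follows the same route as the paper: test (\ref{weak}) with $\phi=u^\varepsilon$ and peel off the three estimates from the three terms on the left, using Young's inequality on the right. Your added convex splitting $a^{(1)}+\varepsilon^2 a^{(0)}=(1-\varepsilon^2)a^{(1)}+\varepsilon^2\bigl(a^{(0)}+a^{(1)}\bigr)$ is a genuine clarification, since the paper's remark that all terms on the left of \eqref{apri1} are non-negative would otherwise require $a^{(0)}$ itself to be non-negative, which is not assumed; and your Fourier/Plancherel derivation of $\int_\Omega A^{(0)}\nabla v\cdot\nabla v\,dx\ge\nu\|\nabla v\|_2^2$ on $(H^1_0(\Omega))^n$ correctly supplies the content behind the paper's one-line assertion that \eqref{coerc1}--\eqref{coerc2} imply \eqref{apest2} — and it is exactly the right tool, since \eqref{coerc2} is only a Legendre--Hadamard (rank-one) condition, not pointwise ellipticity, so it must be exploited on the Fourier side for the constant tensor $A^{(0)}$ and functions in $H^1_0$.
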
 
\begin{remark}
Notice that in \eqref{apest3} $\left(a^{(1)}(y)\right)^{1/2}$ is well-defined as a square root of a symmetric non-negative 
$nd\times nd$ square matrix $a^{(1)}(y)$, see \eqref{alinfty}--\eqref{nonneg}. An alternative approach, avoiding directly 
introducing $\left(a^{(1)}(y)\right)^{1/2}$ could be treating in \eqref{apest3} $\nabla u^\varepsilon$ with respect to 
a ($\varepsilon$-rescaled) {\it matrix (tensor) measure} $d\mu_{ijpq}(y)=a^{(1)}_{ijpq}(y)dy$ and appropriately modifying 
further the method of two-scale convergence with respect to measures, cf. \cite{Zhikov2000}. 
\end{remark} 
\begin{proof}
In a standard way, 
setting in (\ref{weak})  $\phi= u^\varepsilon$ results in
\begin{equation}
\int_\Omega \biggl[ a^{(1)}\left(\frac{x}{\varepsilon}\right) {\nabla} u^\varepsilon\cdot \nabla u^\varepsilon(x)+
\varepsilon^2 a^{(0)}\left(\frac{x}{\varepsilon}\right) {\nabla} u^\varepsilon\cdot \nabla u^\varepsilon(x)+ 
\lambda \rho\left(\frac{x}{\varepsilon}\right) u^\varepsilon\cdot u^\varepsilon  \biggr]dx= 
\int_\Omega \rho\left(\frac{x}{\ep}\right)f^\varepsilon(x)\cdot u^\varepsilon(x)\,dx.
\label{apri1}
\end{equation}
The integrals in \eqref{apri1} can be viewed as over the whole of ${\mathbb R}^d$ by extending $u^\ep$ outside  
$\Omega$ by zero. Then one observes that 
the sum of the first two terms and the third term on 
the left hand side %of (\ref{apri1}) 
are non-negative by \eqref{nonneg}--\eqref{coerc1} and \eqref{rhobd},  respectively.  
For the right hand side, 
\[
\int_\Omega f^\varepsilon(x)\cdot u^\varepsilon(x)\,dx\,\leq\,\frac{1}{2}\lambda\nu\left\|u^\varepsilon\right\|_2^2\,
+\,\frac{1}{2}(\lambda\nu)^{-1}\left\|\rho\left(\frac{x}{\ep}\right)f^\varepsilon\right\|_2^2,
\]
which recalling again (\ref{rhobd}) yields (\ref{apest1}). Further, (\ref{coerc1}) and \eqref{nonneg}  immediately imply (\ref{apest2}).
Finally, for the first term on the left hand side of (\ref{apri1}):
\[
\int_\Omega a^{(1)}\left(\frac{x}{\varepsilon}\right) {\nabla} u^\varepsilon\cdot \nabla u^\varepsilon(x)\,dx\,=\,
\left\|\bigl( a^{(1)}(x/\varepsilon)\bigr)^{1/2} \nabla u^\varepsilon \right\|_2^2
\]
which yields (\ref{apest3}).
\end{proof}

\subsection{Functional spaces for two-scale limits}

We next introduce for the periodicity torus $Q$ the following
key linear subspace $V$ of $\left(H^1_{\#}(Q)\right)^n$ of $Q$-periodic (vector-)functions in ${\mathbb R}^d$ which are locally in $H^1$:  
\begin{equation}
V\,:=\, \left\{\biggl. v\in \left(H^1_{\#}(Q)\right)^{n}
\,\biggr\vert \ a^{(1)}(y)\nabla_y v\,=\,0\,\right\}.
\label{herset}
\end{equation}
$V$ can be interpreted as describing the domain of possible
microscopic variations of a (two-scale) limit of the solution $u^\varepsilon$. 

We also introduce  the following `dual' space $W$ of admissible `microscopic fluxes', of tensor fields
on  $Q$:
\begin{equation}
W\,:=\,\left\{\, \psi\,\in\,\left(L^2_\#(Q)\right)^{n\times
d}\,\left\vert \, \mbox{
div}_y\left(\,\left(a^{(1)}(y)\right)^{1/2}\psi(y)\,\right)\,=\,0
\ %\mbox{ in  } \left(H_\#^{-1}(Q)\right)^n\,
\right.\right\},
\label{wspace}
\end{equation}
where $L^2_\#(Q)$ denotes $Q$-periodic functions from $L^2_{loc}\left({\mathbb R}^d\right)$. 
In \eqref{wspace} the divergence is understood in the sense of distributions on the periodic torus $Q$, i.e., equivalently, 
\begin{equation}
W\,:=\,\left\{\, \psi\,\in\,\left(L^2_\#(Q)\right)^{n\times
d}\,\left\vert \, 
\int_Q  \left(a^{(1)}(y)\right)^{1/2}\psi(y)\cdot \nabla\phi(y)dy\,=\,0, \ \ \forall \phi\in \left(H^1_{\#}(Q)\right)^{n} 
\right.\right\}.
\label{wspace2}
\end{equation}
It immediately follows from the definitions \eqref{herset} and \eqref{wspace2} that $V$ and $W$ are closed linear subspaces of Hilbert spaces $\left(H^1_{\#}(Q)\right)^{n}$ and $\left(L^2_\#(Q)\right)^{n\times
d}$ respectively, and hence can themselves be regarded as Hilbert spaces with respective inherited   
$H_\#^1$ and $L^2$ inner products. 

%Obviously $W$ is a closed linear subspace of Hilbert space $\left(L^2_\#(Q)\right)^{n\times d}$. 

We will additionally introduce, in a standard way, Hilbert spaces $L^2\left(\Omega; \left(H^1_{\#}(Q)\right)^{n}\right)$, $L^2(\Omega; V)$ and $L^2(\Omega; W)$ of 
functions of two independent variables $x\in\Omega$ and $y\in Q$, which can thereby be regarded as functions of $x$ with values in the appropriate (Hilbert) space. 

%These typically form dense subspaces in Hilbert spaces of particular interest to us below. 

The a priori estimates (\ref{apest1})--(\ref{apest3}), via adapting accordingly the properties
of the two-scale convergence, % (see e.g. \cite{nguetseng,allaire,Zhikov2000}), 
imply the following
\begin{lemma}\label{lem22}
Let $\|f^\varepsilon\|_2$ be uniformly bounded. 
Then there exist $u_0(x,y)\in L^2\left(\Omega;\, V\right)$ and
$\xi_0(x,y)\in L^2\left(\Omega; \, W\, \right)$ such that, up
to extracting a subsequence in $\varepsilon$ which we do not
relabel,
\begin{eqnarray}
u^\varepsilon&\stackrel{2}\rightharpoonup& u_0(x,y)         \label{2sc1} \\
\varepsilon \nabla u^\varepsilon  &\stackrel{2}\rightharpoonup& \nabla_y u_0(x,y)           \label{2sc2} \\
\bigl( a^{(1)}(x/\varepsilon)\bigr)^{1/2} \nabla u^\varepsilon\,   &\stackrel{2}\rightharpoonup& \xi_0(x,y).
\label{2sc3}
\end{eqnarray}
\end{lemma}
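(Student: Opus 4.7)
The plan is to extract a common two-scale convergent subsequence and then identify the structure of each limit, with the last step (divergence-free structure of $\xi_0$) being the essential new ingredient. First, apply the Nguetseng--Allaire compactness theorem (every $L^2$-bounded sequence admits a two-scale convergent subsequence with limit in $L^2(\Omega\times Q)$) to each of $u^\varepsilon$, $\varepsilon\nabla u^\varepsilon$, and $(a^{(1)}(x/\varepsilon))^{1/2}\nabla u^\varepsilon$; these are all $L^2$-bounded by (\ref{apest1})--(\ref{apest3}). Successive extraction produces a single subsequence along which all three two-scale converge to limits $u_0(x,y)$, $\chi_0(x,y)$, and $\xi_0(x,y)$.

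Second, the classical gradient-identification lemma for two-scale convergence, applied componentwise to the $L^2$-bounded pair $(u^\varepsilon, \varepsilon\nabla u^\varepsilon)$, yields $u_0 \in L^2(\Omega; (H^1_\#(Q))^n)$ with $\chi_0 = \nabla_y u_0$; the argument is to test $\varepsilon\nabla u^\varepsilon$ against $\Psi(x, x/\varepsilon)$ satisfying $\mbox{div}_y\Psi = 0$, integrate by parts in $x$, and then identify the potential via a further test against a product $\theta(x)\psi(x/\varepsilon)$. To upgrade to $u_0 \in L^2(\Omega; V)$, I use (\ref{apest3}): since $(a^{(1)})^{1/2}\nabla u^\varepsilon$ is $L^2$-bounded, $\varepsilon(a^{(1)}(x/\varepsilon))^{1/2}\nabla u^\varepsilon \to 0$ strongly in $L^2$, hence two-scale to $0$. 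The same sequence also equals $(a^{(1)}(x/\varepsilon))^{1/2}\cdot(\varepsilon\nabla u^\varepsilon)$, whose two-scale limit, by the multiplication property of two-scale convergence with the bounded periodic factor $(a^{(1)})^{1/2}$, is $(a^{(1)}(y))^{1/2}\nabla_y u_0(x,y)$. Equating gives $(a^{(1)}(y))^{1/2}\nabla_y u_0 = 0$, hence $a^{(1)}(y)\nabla_y u_0 = 0$ for a.e. $(x,y)$, so $u_0(x,\cdot)\in V$.

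Third, and this is the main obstacle, showing $\xi_0 \in L^2(\Omega; W)$ genuinely requires the PDE rather than only the a priori bounds. In the weak form (\ref{weak}), take $\phi(x) = \varepsilon\theta(x)\psi(x/\varepsilon)$ with $\theta\in C_c^\infty(\Omega)$, $\psi\in(C^\infty_\#(Q))^n$; note $\nabla\phi = \varepsilon\nabla\theta\,\psi(x/\varepsilon) + \theta(\nabla_y\psi)(x/\varepsilon)$. The right-hand side $\int f^\varepsilon\cdot\phi$ and the mass term $\lambda\int\rho^\varepsilon u^\varepsilon\cdot\phi$ are each $O(\varepsilon)$ and vanish. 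The flux term, expanded via $a^\varepsilon = a^{(1)} + \varepsilon^2 a^{(0)}$, splits into four pieces; three of these vanish through careful $\varepsilon$-bookkeeping: the pieces involving $\varepsilon\nabla\theta$ with $a^{(1)}$ use the $L^2$-boundedness of $a^{(1)}\nabla u^\varepsilon$ itself (a consequence of (\ref{apest3}) together with $(a^{(1)})^{1/2}\in L^\infty$), while the pieces containing $\varepsilon^2 a^{(0)}$ use (\ref{apest2}). The sole surviving piece, $\int a^{(1)}(x/\varepsilon)\nabla u^\varepsilon\cdot\theta(\nabla_y\psi)(x/\varepsilon)\,dx$, passes to the two-scale limit via $a^{(1)}(x/\varepsilon)\nabla u^\varepsilon = (a^{(1)}(x/\varepsilon))^{1/2}\cdot\bigl((a^{(1)}(x/\varepsilon))^{1/2}\nabla u^\varepsilon\bigr)\stackrel{2}{\rightharpoonup}(a^{(1)}(y))^{1/2}\xi_0(x,y)$, yielding $\int_\Omega\int_Q(a^{(1)}(y))^{1/2}\xi_0(x,y)\cdot\theta(x)\nabla_y\psi(y)\,dy\,dx$. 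Setting this to zero for arbitrary $\theta,\psi$ gives $\mbox{div}_y[(a^{(1)})^{1/2}\xi_0] = 0$ in $(H^{-1}_\#(Q))^n$ for a.e.\ $x$, i.e.\ $\xi_0\in L^2(\Omega; W)$. The delicate matching of $\varepsilon$-powers against the degenerate decomposition $a^\varepsilon = a^{(1)} + \varepsilon^2 a^{(0)}$, together with the realisation that it is not $\nabla u^\varepsilon$ but the $L^2$-bounded combination $a^{(1)}\nabla u^\varepsilon$ which one must track, is what makes this final step the crux of the argument.
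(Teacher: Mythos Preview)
Your proof is correct and follows essentially the same route as the paper's: two-scale compactness from the a~priori bounds, gradient identification (the paper simply cites Allaire's Prop.~1.14(ii)), the upgrade $u_0\in L^2(\Omega;V)$ via $\varepsilon(a^{(1)})^{1/2}\nabla u^\varepsilon\to 0$ strongly, and the membership $\xi_0\in L^2(\Omega;W)$ by testing \eqref{weak} with $\varepsilon\Phi(x,x/\varepsilon)$. The only cosmetic differences are that the paper uses a general two-scale test function $\Phi(x,y)$ rather than your tensor product $\theta(x)\psi(y)$, and treats $\xi_0\in W$ before $u_0\in V$; your more explicit $\varepsilon$-bookkeeping of the four flux pieces is a welcome elaboration of what the paper states in one line.
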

\begin{proof}
1. According to the theorem on %relative 
(weak) two-scale
compactness of a bounded sequence in $L^2(\Omega)$, %see e.g. \cite{nguetseng, allaire}, 
the \'a priori estimate 
\eqref{apest3} implies, up to extracting a subsequence in
$\varepsilon$ (not relabelled), the existence of a weak 
two-scale limit $\xi_0\in \left(L^2\left(\Omega \times
Q\right)\right)^{n\times d}=L^2\left(\Omega;
\,\left(L^2_{\#}(Q) \right)^{n\times d}\right)$, which yields
\eqref{2sc3}.

We show that in fact $\xi_0(x,y)\in L^2\left(\Omega; \,
W\,\right)$. Take in \eqref{weak}
$\phi(x)=\phi^\varepsilon(x)=\,
\varepsilon\,\varphi(x)b\left(\frac{x}{\varepsilon}\right)$ for
any $\varphi\in C_0^\infty(\Omega)$ and $b\in \left(C_\#^\infty(Q)\right)^{n}$. 
Passing then to the limit
in \eqref{weak} we notice, via \eqref{apest1} and
\eqref{apest2}, that the limit of each term but the first one
on the left hand-side of \eqref{weak} is zero, and therefore
\begin{eqnarray}
\lim_{\varepsilon \to 0} \int_\Omega
a^{(1)}\left(\frac{x}{\varepsilon}\right) \nabla
u^\varepsilon(x)\,\cdot\,\varepsilon\nabla\left( \varphi(x)b\left(
\frac{x}{\varepsilon}\right)\right) \,dx\,=\, \ \ \ \ \ \ 
\nonumber\\ 
\ \ \ \ \ \ \ \ 
\int_\Omega\varphi(x)\int_Q 
\left(a^{(1)}(y)\right)^{1/2}
\xi_0(x,y)\,\cdot\,\nabla_y b(y)\,dy\,dx\,=\,0, 
\label{pfl221o}
\end{eqnarray}
where we have used the assumption \eqref{alinfty} of boundedness of $a^{(1)}$. 
The density of $\varphi(x)$ in $L^2(\Omega)$ implies that for all $b\in \left(C^\infty_\#(Q)\right)^n$ the  inner integral is zero for a.e. $x\in\Omega$. Since $b(y)$ are in turn dense in 
$ \left(H^1_{\#}(Q)\right)^{n}$, this implies that, for a.e. $x$, $\xi_0(x,\cdot)$ obeys \eqref{wspace2} 
and hence $\xi_0(x,\cdot)\in W$ implying $\xi_0\in L^2\left(\Omega;\,W\right)$.

2. Further, according to e.g. \cite[Prop. 1.14 (ii)]{allaire}, $ \ $ \eqref{apest1} together with \eqref{apest2}
imply \eqref{2sc1}--\eqref{2sc2}
for some $u_0(x,y)\in L^2\left(\Omega;\, \left(H^1_\#(Q)\right)^n\right)$. % and \eqref{2sc2}.

Show finally that in fact $u_0(x,y)\in L^2\left(\Omega;\,
V\right)$. For any $\psi(x,y)=\varphi(x)b\left(\frac{x}{\varepsilon}\right)$ with 
$\varphi\in C_0^\infty(\Omega)$ and $b\in \left(C_\#^\infty(Q)\right)^{n\times d}$, 
\begin{equation}
\lim_{\varepsilon \to 0} \int_\Omega \left(a^{(1)}\left(\frac{x}{\varepsilon}\right)\right)^{1/2}
\varepsilon\nabla u^\varepsilon(x)\cdot
\psi\left(x, \frac{x}{\varepsilon}\right) \,dx\,=\,
\int_\Omega\int_Q \left(a^{(1)}(y)\right)^{1/2}
\nabla_y u_0(x,y)\cdot\psi(x,y)\,dx\,dy,
\label{pfl221}
\end{equation}
where we have used \eqref{2sc2}. %with $u_0(x,y)\inL^2\left(\Omega;\, H^1_\#(Q)\right)$ and 
%the assumption \eqref{alinfty} of boundedness of $a^{(1)}$.
%,  $a^{(1)}\in L_\#^\infty(Q)$.

On the other hand, \eqref{apest3} ensures that
\[
\left\|\left(a^{(1)}\left(\frac{x}{\varepsilon}\right)\right)^{1/2}\varepsilon\nabla u^\varepsilon(x)\right\|_2\,\to\,0,
\]
and hence the limit in \eqref{pfl221} is zero. This implies for the right hand side of \eqref{pfl221},
\[
\int_\Omega\varphi(x)\int_Q \left(a^{(1)}(y)\right)^{1/2}\nabla_y u_0(x,y)\cdot b(y)\,dy\,dx\,=\,0,
\ \ \ \forall \varphi \in C_0^\infty(\Omega), \ b\in \left(C_\#^\infty(Q)\right)^{n\times d}.
\]
By density of $\varphi$ and $b$, this gives
\begin{equation}
\left(a^{(1)}(y)\right)^{1/2}\nabla_y u_0(x,y)\,=\,0 \ \ \mbox{for a.e. } \, x,
\label{pfl222}
\end{equation}
and therefore, pre-multiplying \eqref{pfl222} by $\left(a^{(1)}(y)\right)^{1/2}$,
yields $u_0(x,y)\in L^2\left(\Omega;\, V\right)$, cf \eqref{herset}.
\end{proof}

\section{A generic class of degeneracies and related properties.}

A key problem in the homogenization theory is to relate the limit ``fluxes'' (in the
present case the ``modified'' fluxes $\xi_0(x,y)$) to the limit ``fields''
$u_0(x,y)$. This can be achieved only if  the general degeneracy described by $a^{(1)}(y)$
satisfies some additional restrictions. We impose below a key generic
technical assumption on $a^{(1)}(y)$ which is sufficient for this purpose.
We will %later demonstrate 
see that this
assumption {\it is} satisfied for most of previously studying models in both
classical and non-classical homogenization, as well as will refer to some more recent examples.

Let $\left( \,\cdot\,,\, \cdot \,\right)_{H^1}$ be an inner
product in $\left(H_\#^1(Q)\right)^n$. Denote $V^\bot$ the
orthogonal complement  to $V$ defined by \eqref{herset}, i.e.
\[
 V^\bot\,:=\,\left\{\biggl. w\in \left(H_\#^1(Q)\right)^n\, \biggr\vert\,
 \left(\, w, \, v\,\right)_{H^1}\,=\,0, \ \ \forall
v\in V  \right\}.
\]
Then $\left(H_\#^1(Q)\right)^n$ is  a direct orthogonal sum
of (closed)  $V$ and $V^\bot$,
\begin{equation}
\left(H_\#^1(Q)\right)^n\,=\,V\,\oplus\,V^\bot,
\label{vort}
\end{equation}
 i.e. any $v$ in $\left(H_\#^1(Q)\right)^n$ is
uniquely decomposed into the sum $v=v_1+v_2$, where $v_1\in V$ and $v_2\in V^\bot$.
The key assumption is the following:

\begin{assumption}[Key assumption on the degeneracy]:   
%{\bf Key assumption on the degeneracy:}
{\it There exists a constant $C>0$ such that for all $v\in \left(H_\#^1(Q)\right)^n$
there exists $v_1\in V$ with
\begin{equation}
\left\|v\,-v_1\right\|_{\left(H^1_\#(Q)\right)^n}\,\leq\, C\,\left\|a^{(1)}(y)\nabla_yv\right\|_2.
\label{keyass}
\end{equation} }
\vspace{.1in}
\label{keyassump}
\end{assumption} 

The condition \eqref{keyass} can %obviously 
be equivalently re-written as
\begin{equation}
\left\|P_{V^\bot} v\right\|_{\left(H^1_\#(Q)\right)^n}\,\leq\, C\,\left\|a^{(1)}(y)\nabla_yv\right\|_2,
\label{keyass2}
\end{equation}
where $P_{V^\bot}$ is the orthogonal projector on $V^\bot$. The equivalence of \eqref{keyass}
and \eqref{keyass2} immediately follows by noticing that $v_1=P_Vv$, where $P_V$ is the orthogonal
projector on $V$, is the best (i.e. minimizing the left hand side of \eqref{keyass}) choice of $v_1$ for \eqref{keyass}. 
The property \eqref{keyass}, and hence 
equivalently  \eqref{keyass2}, obviously does not depend on the choice of the (equivalent)
inner product and hence the norm in $\left(H_\#^1(Q)\right)^n$.

\subsection{Examples of the key assumption \eqref{keyass}}\label{keyassexamples}
The key assumption \eqref{keyass}, equivalently \eqref{keyass2}, as well as indeed the initial assumptions 
\eqref{rhobd}--\eqref{coerc1}, 
appears to hold for most %if not all 
of the previously considered cases which may involve an $\ep^2$-contrast of a general form \eqref{aeps}. In each particular case, the validity or otherwise of \eqref{keyass} has to be established by separate means, and we briefly discuss below some of those cases. 
%\vspace{.1in} 

1. {\it Classical scalar homogenization}. In this simplest case, $n=1$, 
$\rho_{11}(y)\equiv 1$, and $a^{(1)}_{1j1q}(y)$ is a uniformly bounded symmetric 
positive definite matrix i.e. $a^{(1)}_{1j1q}(y)\zeta_j\zeta_q\geq \nu|\zeta|^2$ for all 
$\zeta\in {\mathbb R}^d$ and for a.e. $y\in Q$; $a^{(0)}_{1j1q}(y)\equiv 0$. 
Then, from \eqref{herset}, $V$ is the 
one-dimensional space of constant 
functions on $Q$ and \eqref{keyass} follows from the Poincar\'{e} inequality %with a mean 
in $H^1_\#(Q)$: 
$\left\|v\,-\langle v\rangle\right\|_{L^2_\#(Q)}\,\leq\, C\,\left\|\nabla_yv\right\|_{L^2_\#(Q)}$ for all 
$v\in H^1_\#(Q)$, where $\langle v\rangle:=\int_Q v$ is the mean of $v$. Hence \eqref{keyass} 
immediately follows by taking $v_1=\langle v\rangle\in V$. 
Notice that the nonegativity and uniform strong ellipticity conditions \eqref{nonneg} and 
\eqref{coerc1} are also trivially held. 
%\vspace{.17in} 

2. {\it Double porosity-type models.} This corresponds, in the simplest case (see e.g. \cite{Zhikov2000}), 
to $n=1$ and $a^{(1)}_{1j1q}(y)=\delta_{jq}\chi_1(y)$, $a^{(0)}_{1j1q}(y)=\delta_{jq}\chi_0(y)$, 
where $\delta_{ij}$ is Kroneker symbol, 
$\chi_1(y)=1-\chi_0(y)$, and $\chi_0(y)$ is characteristic function of an 
 open inclusion $Q_0$, $\overline{Q_0}\subset Q$ with regular enough boundary and connected complement 
$Q_1:=Q\backslash \overline{Q_0}$. (Hence $\chi_1$ is 
characteristic function of a periodically connected matrix $Q_1$.) 
According to \eqref{herset}, $v\in V\subset H^1_\#(Q)$ must be constant on %(connected) 
$Q_1$ and 
arbitrary otherwise, i.e. for any $v\in V$, $v=c+\tilde v$ where $c\in{\mathbb R}^d$ and 
$\tilde v\in H^1_0(Q_0)$ extended to $Q_1$ by zero. 
The key assumption \eqref{keyass} then directly follows from an extension lemma (see e.g. \cite{JKO}, Lemma 3.2) 
implying (in particular) that given $v\in H_\#^{1}(Q)$ there exists $ v_2\in H_0^1(Q_0)$ 
such that 
$\left\|\nabla(v\,-v_2)\right\|_{L^2_\#(Q)}\,\leq\, C\,\left\|\nabla v\right\|_{L^2_\#(Q_1)}$. 
%(The latter extension can be achieved by e.g. the `harmonic extension' of $v$ from $Q_1$ into $Q_0$.) 
Then, by choosing $v_1=v_2+\langle v-v_2\rangle\in V$, \eqref{keyass} follows from the  Poincar\'{e} inequality %with the mean 
and the above extension result. Note that the conditions \eqref{nonneg} and 
\eqref{coerc1} are again trivially held. 

One can see that the assumption \eqref{keyass} is in fact satisfied for rather general ``multi-component'' 
high-contrast configurations, cf. e.g. \cite{panasenko, Zhikov2000}, of the double porosity type. For example, for $d\geq 3$, let $a^{(1)}_{1j1q}(y)=\delta_{jq}\sum_{m=1}^M\chi_m(y)$, where $\chi_m$, $m=1,2,...,M$, are characteristic 
functions of disjoint ``stiff'' phases $Q_m$ each of which is periodically connected and has Lipschitz boundary. In the remaining ``soft'' 
phase $Q_0=Q\backslash \cup_{m=1}^M \overline{Q_m}$, let $a^{(0)}_{1j1q}(y)=\delta_{jq}\chi_0(y)$. Then 
$V$ consists of all $v\in H^1_\#(Q)$ whose values on $Q_m$ are some constants $c_m\in {\mathbb R}$. Given 
$v\in H^1_\#(Q)$, a function $v_1$ satisfying \eqref{keyass} can be constructed as follows. Set 
$c_m=\langle v\rangle_m:=|Q_m|^{-1}\int_{Q_m} v(y) dy$ (i.e. $c_m$ is the mean of $v$ over $Q_m$), and let 
$\hat v(y)=v(y)-\sum_{m=1}^M c_m\chi_m(y)$. Let 
$\tilde v=S \hat v$ where $S$ is an $H^1$-extension from the `combined' stiff phase $Q_s:=\cup_{m=1}^M Q_m$ to 
%$H^1_\#(Q_m)$ to 
$H^1_\#(Q)$ i.e. 
$S w(y)=w(y)$ for $y\in Q_s$ and 
$\|S w\|_{H^1_\#(Q)}\leq C\|w\|_{H^1_\#(Q_s)}$\footnote{
The extension theorems, see e.g. \cite{evans,stein}, are normally formulated for Euclidean domains rather than 
for a periodic torus as needed here. However the result of e.g. Theorem 5 of \S VI.3 of Stein \cite {stein} can 
be used to deduce the desired statement. For example, %fixing $1\leq m\leq M$, 
consider an extension from 
$Q_s\subset {\mathbb R}^d$ which is regarded as an infinite (periodic) set. Then it satisfies all the conditions 
from the above theorem of Stein. Take an infinitely periodic 
$w\in H^1_\#(Q_s)\subset H^1_{loc}({\mathbb R}^d)$ and multiply it by a smooth cut-off function 
$\chi_R(y)$ such that $\chi_R=1$ for $|y|\leq R$, $\chi_R=0$ for $|y|\geq R+1$ and $|\nabla\chi_R(y)|\leq C$. 
Apply the Stein's theorem to $\chi_R w$, denote the relevant extension by 
$\tilde w_R(y)%=S(w(y)\chi_R(y))
$ and consider its (normalized) `periodization' 
$w_R(y):=|B_R|^{-1}\sum_{k\in{\mathbb Z}^d}\tilde w_R(y+k)$. One 
 readily checks that $\|w_R-w\|_{H^1(Q_s)}\to 0$ as $R\to\infty$, and then by continuity of the Stein's extension that $w_R$ has a 
limit ${\cal S} w$ in $H^1_\#(Q)$ with $\|{\cal S}w\|_{H^1_\#(Q)}\leq C\| w\|_{H^1_\#(Q_s)}$ and so can be 
taken as the desired extension.
}. 
%Let $\eta_m(y)\in C^\infty_\#(Q)$, $m=1,2,..,M,$ be a partition 
%of unity for $Q$ such that $\sum_{m=1}^M\eta_m(y)\equiv 1$ and  $\eta_m(y)=\delta_{lm}$ for $y\in Q_l$, $l=1,2,...,M$. 
We then set 
$v_1(y)=v(y)-\tilde v$. It is readily checked that $v_1(y)=c_m$ on $Q_m$, $1\leq m\leq M$, and so 
$v_1\in V$. Further 
\[
\|v-v_1\|_{H^1_\#(Q)}=\left\|%\sum_{m=1}^M\eta_m(y)\tilde 
S\hat v\right\|_{H^1_\#(Q)}\leq 
C%\sum_{m=1}^M
\left\|\hat v\right\|_{H^1_\#(Q_s)}\leq \ \ \ \ 
\]
\[
\ \ \ \ \ \ \ \ \ \ \ \ 
C\sum_{m=1}^M\left\| v(y)-c_m\right\|_{H^1_\#(Q_m)}\leq C \|a^{(1)}(y)\nabla_y v\|_2, 
\]
which gives \eqref{keyass}. (In the last step we have applied the Poincar\'{e} inequality for each $Q_m$, 
noticing that $\langle v-c_m\rangle_m=0$.) 

Similar extension arguments apply to the cases of `isolated' stiff components, e.g. when $Q_1$ is an inclusion, 
$\overline{Q_1}\subset Q$, $Q_0=Q\backslash \overline{Q_1}$.

% \vspace{.17in} 

3. {\it Classical homogenization for linear elasticity.} Let $n=d=3$, $a^{(0)}_{ijpq}(y)\equiv 0$, and 
\beq
a^{(1)}_{ijpq}(y)=\lambda(y)\delta_{ij}\delta_{pq}+
\mu(y)\left(\delta_{ip}\delta_{jq}+\delta_{iq}\delta_{jp}\right), 
\label{lame}
\eeq 
with Lam\'{e} coefficients $\lambda,\mu\in L^\infty_\#(Q)$ such that $\mu(y)\geq\mu_0>0$ and 
$\lambda(y)+2\mu(y)/3\geq \kappa_0>0$. 

Then $a^{(1)}(y)\nabla_yv\equiv 0$ implies that $v$ is a rigid body displacement 
(translation and/ or rotation),  and since the periodicity condition excludes rotations $V$ as defined by \eqref{herset} can only contain 
translations i.e. constant vector functions. Then one can see that \eqref{keyass} holds with 
$v_1=\langle v\rangle$ due to the periodic Korn inequality. % with the mean. 
Condition \eqref{nonneg} is 
known to hold and is equivalent to non-negativity of elastic energy density. Finally \eqref{coerc1} follows by e.g. 
bounding the integrand on its left hand side from below by replacing $a^{(1)}(y)$ with its `homogeneous' analog where 
$\lambda(y)$ and $\mu(y)$ in \eqref{lame} are replaced by, respectively, 
$\lambda_0=\kappa_0-2\mu_0/3$ and $\mu_0$. The resulting integral still satisfies \eqref{coerc1}, which can be 
shown e.g. via applying Fourier transform and Plancherel's theorem. 

We emphasize that the present approach does not cover all the cases of strong ellipticity for linear elasticity. 
For example, for constant $\lambda$ and $\mu$, $\lambda(y)=\lambda_0$ and $\mu(y)=\mu_0$, 
the condition 
ensuring (in the absence of $a^{(0)}$) \eqref{coerc1} is known to be $\mu_0\geq\nu$ and $\lambda_0+2\mu_0\geq \nu$, $\nu>0$. So for $\mu_0>0$ and $-2\mu_0<\lambda_0<-2\mu_0/3$, the condition \eqref{nonneg} would not hold. 
%However, for such $\lambda_0$ and $\mu_0$ and e.g. $\lambda(y)=\lambda_0+\tilde\lambda(y)$ and
%$\mu(y)=\mu_0+\tilde\mu(y)$ with small enough $\tilde\mu$ and $\tilde\lambda$ such that 
%$\tilde\mu(y)\geq 0$ and $\tilde\lambda(y)+2\tilde\mu(y)/3\geq 0$, despite the problem \eqref{weak} being 
%well-posed for all $\ep>0$, 
Remark that under certain scenarios the `strict strong ellipticity' in linear elasticity can be lost 
through 
homogenization, cf. e.g. \cite{briane}.

4. {\it Elasticity, soft inclusions, cf \cite{ZhP13}.} Let $n=d=3$, and given an inclusion $Q_0$ as in Example 2 above, 
$a^{(1)}(y)$ be as in \eqref{lame} but additionally multiplied by $\chi_1(y)$, the characteristic function of 
connected matrix $Q_1=Q\backslash\overline{Q_0}$. Let $a^{(0)}(y)$ be also as in \eqref{lame} multiplied in 
turn by $\chi_0(y)=1-\chi_1(y)$. (So the model is a linear elastic version of the above double porosity one.) 
Then $V=\left\{v\in \left(H^1_\#(Q)\right)^3\,:\, v=c+\tilde v, c\in {\mathbb R}^3, 
\tilde v\in \left(H^1_0(Q_0)\right)^3\right\}$, and \eqref{keyass} can be achieved e.g. by combining the above periodic 
Korn inequality with an 
%elastic version of the 
extension lemma. (One way for achieving such an extension is essentially as in the second half of 
Example 2, i.e. by setting $v_1=v-S(v-\langle v\rangle_1)$ where $\langle v\rangle_1$ is the mean of $v$ over 
the matrix $Q_1$ and $S$ is an $H^1_\#$-bounded extension from $Q_1$ to $\left(H^1_\#(Q)\right)^3$.) 
%elastic analog of %harmonic extension, e.g. via a solution of a homogeneous elasticity problem in $Q_0$ with 
%the given Dirichlet trace of $v$ on $\partial Q_0$.) 
Similarly to the previous example, conditions \eqref{nonneg} 
and \eqref{coerc1} are checked to be readily satisfied. 

Similarly to Example 2, one can show that \eqref{keyass} holds also for multi-component elastic 
matrices with connected stiff components.

5. {\it Elasticity with $O(\ep^2)$ shear modulus in inclusions \cite{Cooper14}}. 
In this case $a^{(1)}(y)$ is as in \eqref{lame} except $\mu(y)$ is additionally multiplied by $\chi_1(y)$, and 
$a^{(0)}(y)$ is in turn as in \eqref{lame} multiplied by $\chi_0(y)$. (So the inclusions is stiff in compression but soft in shear.) Then, assuming $\overline{Q_0}\in Q$ and $\partial Q_0$ regular enough, %\cite{Cooper14} 
\[
V=\left\{v\in \left(H^1_\#(Q)\right)^3\,:\, v=c+\tilde v, c\in {\mathbb R}^3, 
\tilde v\in \left(H^1_0(Q_0)\right)^3; \, \, \mbox{div\,} v=0 \,\mbox{ in }\, Q\right\}. 
\]
Then, as shown in \cite{Cooper14}, the key assumption \eqref{keyass} follows from a `modification lemma' 
(a version of a lemma on existence of vector fields with prescribed divergence, see e.g. \cite{piatn}): 
given a vector filed in $H^1_\#(Q)$, there exists $v_2\in \left(H^1_0(Q_0)\right)^3$ such that $\mbox{div}\,v_2=0$ in $Q$, 
and 
\[
\left\|\nabla(v\,-v_2)\right\|_{\left(L^2_\#(Q)\right)^3}\,\leq\, C\,
\left(\left\|\nabla v\right\|_{\left(L^2(Q_1)\right)^{3\times 3}}+ \left\|\mbox{div\,} v\right\|_{L^2(Q_0)}\right). 
\]
%\vspace{.07in} 

6. {\it Elasticity with stiff fibers.} In this case some stiff components can allow certain periodic rotations, cf 
\cite{bellieud}. 
In the simplest case of a single stiff cylindrical fiber, the equations have the same form as in Example 4, but $Q_1$ is a cylinder, i.e. $Q_1= \hat{Q}_1\times (0,1)$, where the two-dimensional connected cross-section $\hat{Q}_1$ with smooth boundary is such that $\overline{\hat{Q}_1}\subset (0,1)^2$. Then $V=\left\{v\in \left(H^1_\#(Q)\right)^3\,:\, v=c+\alpha\,y\times e_3 \, \text{in}\, Q_1 \right\}$, where $ c\in {\mathbb R}^3$, $\alpha\in {\mathbb R}$, and $\times$ denotes the standard vector cross-product. Here $c+\alpha\, y\times e_3$ represent  admissible (i.e. consistent with the $Q$-periodicity condition) rigid body motions of $Q_1$, i.e arbitrary translations and a rotation about the cylinder's axis parallel to the unit vector $e_3$ in the 
$y_3$-direction. In order to verify \eqref{keyass}, for a given $v\in \left(H^1_\#(Q)\right)^3$ define ${\tilde v}\in \left(H^1_\#(Q_1)\right)^3$ by ${\tilde v}=v-\tilde c-\tilde\alpha \,y\times e_3$, where 
$ \tilde c\in {\mathbb R}^3$  and $\tilde\alpha\in {\mathbb R}$  are such that 
$$ \int_{Q_1}{\tilde v}\,dy=0\ \text{and} \int_{Q_1}{\tilde v}\cdot\left( y\times e_3\right)dy=0\,,
$$
i.e. $\tilde v$ has zero average translations and rotations. (It is straightforward to see that such, unique, 
$\tilde c$ and $\tilde\alpha$ do exist.) 
Then one can choose $v_1$ in  \eqref{keyass} as follows: 
$v_1(y)=v(y)-%\chi(y_1,y_2)
(S{\tilde v} )(y)$ where %$\chi \in C_0^\infty\left((0,1)^2\right)$ 
%is such that $\chi(y_1,y_2)=1$ when $(y_1,y_2)\in {\hat Q_1}$, and 
$S:\left(H^1_\#(Q_1)\right)^3\rightarrow \left(H^1_\#(Q)\right)^3$ is any 
$H^1$-bounded extension. Indeed $v_1\in V$, and 
$$\|v-v_1\|_{\left(H^1_\#(Q)\right)^3}= \|%\chi 
S{\tilde v}\|_{\left(H^1_\#(Q)\right)^3}\leq C 
\|{\tilde v}\|_{\left(H^1_\#(Q_1)\right)^3}.
$$
It remains to employ the following version of Korn's inequality
$$C\|{ w}\|_{\left(H^1_\#(Q_1)\right)^3}^2\leq  \int_{Q_1 }a^{(1)}\nabla w \cdot \nabla w \,dx \,+\,
\left|\int_{Q_1}{\tilde w}\,dy\right|^2\,+\,\left|\int_{Q_1}{\tilde w}\cdot\left( y\wedge e_3\right)dy\right|^2,\, \forall w \in \left(H^1_\#(Q_1)\right)^3.
$$
The latter 
in turn follows from the standard Korn's inequality in $H^1(Q_1)$ and usual arguments about equivalent norms in Banach spaces, see e.g. {\it equivalence lemma} in  \cite{tartar}. 

Similar arguments apply to the cases of presence of several stiff fibers parallel to different axes and/ or 
of isolated stiff `grains' (with unconstrained rotations for the latter), cf. \cite{bellieud}.

7. {\it Photonic crystal fibers with a `near-critical' propagation}. 
As shown in \cite{CKS14}, for a photonic crystal fiber type waveguide structure and the wave propagation with an $e^{i\beta x_3}$-dependence in the Maxwell's equations along the fibers, for 
an `almost critical' propagation constants $\beta$ 
the problem 
can be reduced to that of the form 
\eqref{rhobd}--\eqref{aeps} with $n=d=2$, 
$\rho(y)=\chi_1(y)\rho_1+\chi_0(y)\rho_0$ with certain constant positive diagonal 2$\times$2 matrices 
$\rho_0$ and $\rho_1$, 
 and with a degenerate 
quadratic form due to $a^{(1)}(y)$ as follows: 
\[
a^{(1)}(y)\nabla v\cdot\nabla v\,=\, 
\chi_1(y)\left(
|v_{1,1}+v_{2,2}|^2+\,|v_{1,2}-v_{2,1}|^2\right), 
\]
with $\chi_1(y)$ being characteristic function of (two-dimensional) connected matrix $Q_1$, 
$\chi_0(y)=1-\chi_1(y)$,  similarly to the previous examples. 
Then $V=\left\{ v\in \left(H^1_\#(Q)\right)^2\,:\, v_{1,1}+v_{2,2}=v_{1,2}-v_{2,1}=0 \ \mbox{ in } Q_1\right\}$
i.e. $v$ is required to satisfy   Cauchy-Riemann type conditions in $Q_1$. 

The key assumption \eqref{keyass} then states that there exists $v_1\in V$ such that 
\[
\left\|v\,-v_1\right\|_{\left(H^1_\#(Q)\right)^2}\,\leq\, C\,\left(\left\| v_{1,1}+v_{2,2}\right\|_{L^2(Q_1)}+ \left\|v_{1,2}-v_{2,1}\right\|_{L^2(Q_1)}\right). 
\]
The latter inequality is proved in \cite{CKS14}. 
%\vspace{.27in} 

8. {\it Three-dimensional Maxwell equations with high contrast in electric permittivity (
cf. \cite{CC15} and \cite{BBF17}).}
When the electric permittivity is of order $\ep^{-2}$ in an inclusion and of order one in a (simply connected) 
matrix $Q_1$, the problem can be reduced to the following case:  
\[
V=\left\{v\in\left(H^1_\#(Q)\right)^3: \, \mbox{div\,} v=0 \mbox{  in } Q; \,\mbox{curl\,} v=0 \mbox{  in } Q_1 \right\}.
\] 
Then the key assumption 
\eqref{keyass} can be reduced to the following: given $v\in\left(H^1_\#(Q)\right)^3$ there exists 
$v_1\in V$ 
 such that 
\beq
\left\|\nabla(v\,-v_1)\right\|_{\left(L^2_\#(Q)\right)^3}\,\leq\, C\,
\left( \left\| \mbox{curl\,} v\right\|_{\left(L^2(Q_1)\right)^3}+\left\| \mbox{div\,} v\right\|_{L^2(Q)}
\right).
\label{kassmaxw}
\eeq
See \cite{CC15} where \eqref{kassmaxw} was essentially proved for some related details, as well as recent work \cite{BBF17} on a related topic. 
%\vspace{.27in} 

9. {\it ${\cal A}$-quasiconvexity constant rank assumption}, cf. \cite{FM}.  If 
$a^{(1)}(y) \equiv a^{(1)}$ is a constant tensor, i.e. it does not depend on $y$ at all (which formally still keeps it 
in the general class of periodic functions), then for $v\in \left(H^1_\#(Q)\right)^n$ the first order 
linear differential operator with constant coefficients ${\cal A}v:= a^{(1)}\nabla v$ may be viewed as a 
`differential constraint' and its null space $V:=\left\{v\in \left(H^1_\#(Q)\right)^n: {\cal A}v=0 \right\}$ determines the set of oscillating (periodic) vector fields subject to this differential constraint. 
%A curious observation, and indeed a hint to us in the first place for stating the key assumption in the 
%form \eqref{keyass}, is that %the key assumption 
Then, applying Fourier transform in $Q$-periodic $y$, it is not hard to see that 
\eqref{keyass} is valid if and only if the $n\times n$ matrix $\tilde A(\xi)$, 
$\tilde A_{ip}(\xi):=a^{(1)}_{ijpq}\xi_j\xi_q$ has a constant rank for all $\xi\in S^{d-1}$ where $S^{d-1}$ is 
the unit sphere in ${\mathbb R}^d$. 
This is  similar to constant rank condition in \cite{FM}, which in turn implies a similar key  
decomposition property in the 
$ {\cal A}$-quasiconvexity theory ensuring lower semi-continuity for appropriate variational functionals 
subject to differential constraint $ {\cal A}v:=a^{(1)}\nabla v=0$.  
\vspace{.1in} 

The above list of examples could be continued. % further. 
As a trivial example, it includes the case of 
$a^{(1)}(y)\equiv 0$ (with $a^{(0)}(y)$ hence uniformly strongly elliptic in the sense of \eqref{coerc1}). 
Then obviously $V=\left(H^1_\#(Q)\right)^n$, with the key assumption \eqref{keyass} trivially held 
with $v_1=v$. 

As an example when \eqref{keyass} is {\it not} satisfied, we mention the case of 
highly anisotropic fibers studied in \cite{CSZ06}: 
let $n=1$, $d=3$, and $a^{(1)}_{1j1q}(y)=\delta_{jq}\chi_1(y)+\delta_{j3}\delta_{q3}\chi_0(y)$, 
where $\chi_0$ is the characteristic function of a cylinder $Q_0=\hat Q_0\times [0,1)$, 
$\overline{\hat Q_0}\subset (0,1)^2$. Then for $v\in V$, $v(y)=c+\tilde v(\tilde y)$, where 
$c\in {\mathbb R}$, $\tilde v\in H_0^1(\hat Q_0)$, $\tilde y:=(y_1,y_2)$. One can then see by, 
for example, fixing $v_0(\tilde y)\in H_0^1(\hat Q_0)$, setting 
$v_n(y):= v_0(\tilde y)\sin(n y_1)\cos(2\pi y_3)$ and then increasing $n$ that \eqref{keyass} could not 
possibly be satisfied. For similar reasons, \eqref{keyass} does not appear to be satisfied in the 
two examples with elastic high anisotropy in \S 5 of \cite{VPS09}. 
Notice that \cite{CSZ06} nevertheless establishes a version of the two-scale 
resolvent convergence by employing additional ideas due to two-scale convergence with respect to 
measures, cf. \cite{Zhikov2000} which is likely to be applicable also to the examples in \cite{VPS09}, as well as that in other examples involving ``partial degeneracy'' 
the key assumption does hold, e.g. \cite{Cooper14,CKS14}.

%$a^{(0)}_{1j1q}(y)=\delta_{jq}$, 
%and the scalar function $a(y)\geq 0$ be such that it degenerates at a single point $y_0\in Q$ e.g.  
%$a(y)\leq |y-y_0|^2$ as $y\to y_0$. Then, from \eqref{herset}, clearly $V$ contains only constants.  
%It is easy to see that \eqref{keyass} cannot then hold: e.g. for any constant $C>0$, it will be violated by any 
%function $v\in H^1_\#(Q)$ supported in $B(y_0, 1/C)$, the ball of radius $1/C$ centered at $y_0$. 

\subsection{Properties under the key assumption \eqref{keyass}} 

The condition \eqref{keyass} implies a number of important properties as we demonstrate below.
First it allows to formulate an appropriate well-posed version of the unit cell corrector problem.
We state a related fact in some generality as follows.

Consider the following degenerate boundary value problem on the periodicity cell $Q$:
\begin{equation}
-\,\mbox{div}_y\left(a^{(1)}(y)\nabla_y v\right)\,=\,F, \ \ \ v\in  \left(H_\#^1(Q)\right)^n,
\label{corrgen}
\end{equation}
where $F\in \left(H_\#^{-1}(Q)\right)^n$ is given (i.e. $F$ by definition is a linear continuous functional
on $\left(H_\#^{1}(Q)\right)^n$). For arbitrary $G\in \left(H_\#^{-1}(Q)\right)^n$ and
$w\in \left(H_\#^1(Q)\right)^n$ we denote by $\left\langle G, w\right\rangle$
the duality action of $G$ on $w$. The problem \eqref{corrgen} is then equivalently
formulated in a weak form as follows: find $v\in \left(H_\#^1(Q)\right)^n$ such that
\begin{equation}
\int_Q\, a^{(1)}(y)\nabla_y v(y)\cdot \nabla_y w\,dy\,=\,\langle\, F\,,\, w\,\rangle,
\ \ \ \forall \ w\in \left(H_\#^1(Q)\right)^n.
\label{corrgenvar}
\end{equation}

\begin{theorem}\label{thmsolv}
Under the assumption \eqref{keyass}, 

(i) The problem \eqref{corrgen}, equivalently \eqref{corrgenvar}, is solvable in
$\left(H_\#^1(Q)\right)^n$ if and only if
\begin{equation}
\left\langle\, F\,,\, w\,\right\rangle\,=\,0, \ \ \ \forall\, w\in V.
\label{solvmain}
\end{equation}
When \eqref{solvmain} does hold, the problem \eqref{corrgen} or \eqref{corrgenvar}
is uniquely solvable in $V^\bot$. \newline
(ii) For any solution $v$ and any $v_1\in V$, $v+v_1$ is also a solution. Conversely,
any two solutions can only differ for $v_1\in V$.
\end{theorem}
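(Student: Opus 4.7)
The plan rests on the orthogonal decomposition \eqref{vort} and on showing that the symmetric bilinear form
$$B(v,w)\,:=\,\int_Q a^{(1)}(y)\nabla_y v\cdot\nabla_y w\,dy$$
is coercive on the closed subspace $V^\bot$; Lax--Milgram there then delivers existence and uniqueness, and (ii) will then describe the full solution set. Part (ii) itself is immediate from the definition of $V$: if $v_1\in V$ then $a^{(1)}\nabla_y v_1=0$ pointwise, so $B(v_1,w)=0$ for every $w$, and adding $v_1$ to any solution again yields a solution. Conversely, if $v$ and $\tilde v$ both solve \eqref{corrgenvar}, subtracting and testing with $v-\tilde v$ gives
$$\left\|\left(a^{(1)}\right)^{1/2}\nabla_y(v-\tilde v)\right\|_2^2\,=\,B(v-\tilde v,v-\tilde v)\,=\,0,$$
whence $a^{(1)}\nabla_y(v-\tilde v)=0$ and $v-\tilde v\in V$.

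For (i), necessity of \eqref{solvmain} follows at once by testing \eqref{corrgenvar} against $w\in V$ and using the symmetry \eqref{symm}:
$$\langle F,w\rangle\,=\,\int_Q a^{(1)}\nabla_y v\cdot\nabla_y w\,dy\,=\,\int_Q \nabla_y v\cdot a^{(1)}\nabla_y w\,dy\,=\,0.$$
For sufficiency I would invoke the key assumption in its projected form \eqref{keyass2}: for $v\in V^\bot$ one has $P_{V^\bot}v=v$, so $\|v\|_{(H^1_\#(Q))^n}\leq C\|a^{(1)}\nabla_y v\|_2$. Combined with the factorization $a^{(1)}=(a^{(1)})^{1/2}(a^{(1)})^{1/2}$ and the $L^\infty$-bound \eqref{alinfty} (so that pointwise $|a^{(1)}\zeta|\leq \|(a^{(1)})^{1/2}\|_\infty\,|(a^{(1)})^{1/2}\zeta|$), this yields
$$\|v\|_{(H^1_\#(Q))^n}^2\,\leq\,C'\left\|\left(a^{(1)}\right)^{1/2}\nabla_y v\right\|_2^2\,=\,C'B(v,v),$$
i.e.\ coercivity of $B$ on $V^\bot$. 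Since \eqref{solvmain} makes $\langle F,\cdot\rangle$ well defined and bounded on $V^\bot$, Lax--Milgram produces a unique $v\in V^\bot$ with $B(v,w)=\langle F,w\rangle$ for every $w\in V^\bot$.

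To upgrade this to the full variational identity \eqref{corrgenvar}, I would take an arbitrary $w\in (H^1_\#(Q))^n$, split $w=w_1+w_2$ with $w_1\in V$, $w_2\in V^\bot$, and observe that $B(v,w_1)=0$ (by symmetry of $a^{(1)}$ and $a^{(1)}\nabla_y w_1=0$), while $\langle F,w_1\rangle=0$ by hypothesis \eqref{solvmain}; hence $B(v,w)=B(v,w_2)=\langle F,w_2\rangle=\langle F,w\rangle$. Unique solvability in $V^\bot$, combined with part (ii), then describes the full solution set. The only non-formal step in the argument is the coercivity estimate on $V^\bot$: the key assumption controls $\|P_{V^\bot}v\|_{H^1}$ by $\|a^{(1)}\nabla_y v\|_2$, whereas the natural energy is $\|(a^{(1)})^{1/2}\nabla_y v\|_2^2$, and bridging the two via the pointwise inequality above is the essential place where hypothesis \eqref{keyass} is used; everything else is standard Hilbert-space machinery.
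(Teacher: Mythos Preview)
Your proof is correct and follows essentially the same route as the paper: necessity by testing against $w\in V$ and invoking the symmetry of $a^{(1)}$, sufficiency via Lax--Milgram on $V^\bot$ with coercivity supplied by the key assumption \eqref{keyass2} together with the pointwise bound $|a^{(1)}\zeta|\le\|(a^{(1)})^{1/2}\|_\infty\,|(a^{(1)})^{1/2}\zeta|$, and part (ii) by the same subtraction-and-energy argument. The paper presents these steps in a slightly different order and adds an explicit remark on boundedness of $B$, but the mathematical content is identical.
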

\begin{proof}
(i)
Let $v$ be a solution of \eqref{corrgenvar} and let $w\in V$. Then,
using the symmetry of $a^{(1)}$ and \eqref{herset},
\begin{equation}
\left\langle F\,,\, w\right\rangle\,=\,
 \int_Q\, a^{(1)}(y)\nabla_y
v(y)\cdot \nabla_y w\,dy\,=\, \int_Q\, \nabla_y v(y)\cdot
a^{(1)}(y)\nabla_y w\,dy\,=\,0 \label{fwsymm}
\end{equation}
 yielding
\eqref{solvmain}. Conversely, let \eqref{solvmain} hold and
seek $v\in \left(H_\#^1(Q)\right)^n$ solving
 \eqref{corrgenvar}.
 By \eqref{fwsymm}, the identity \eqref{corrgenvar} is automatically held for
all $w$ in $V$, therefore it is sufficient to verify it for all $w\in V^\bot$.
Seek $v$ also in $V^\bot$. Show that then, in the Hilbert space $H:=V^\bot$ with
the $\left(H_\#^1(Q)\right)^n$-inherited norm $\|\cdot\|_H$,
the problem   \eqref{corrgenvar} satisfies the conditions of the Lax-Milgram lemma
(see e.g. \cite{evans}). Namely, first the bilinear form
\[
B[v,w]\,:=\,\int_Q\, a^{(1)}(y)\nabla_y v(y)\cdot \nabla_y w\,dy
\]
is immediately shown via \eqref{alinfty} to be bounded in $H$, i.e. with some $C>0$,
\[
\biggl\vert\, B[\,v\,,\,w\,] \,\biggr\vert\,\leq\, C\,
\|v\|_H\,\|w\|_H, \ \ \ \forall v,\, w\,\in\,H.
\]
%This follows from \eqref{alinfty} and the fact that $V$ contains constant functions, $w=c\in{\mathbb R}^n$, 
% and hence choosing e.g. as the inner product
%\[
%\left( v, w \right)_{H^1}\,=\,\left(\int_Q v\right)\left(\int_Q w\right)\,+\,
%\int_Q \nabla v \cdot\nabla w,
%\]
%we observe that for $v\in V^\bot$, $(v,c)_{H^1}=0$ for any $c\in {\mathbb R}^n$ and hence 
%\[
%V^\bot\,\subset\,\left\{\left. v\in \left(H^1_\#(Q)\right)^n\, \right\vert \,
%\int_Q v\,=\,0\,\right\}.
%\]
%and the Poincar\'e inequality with the mean
Show now that the form $B$ is coercive, i.e. for some $\nu>0$,
\[
B[v,v]\,\geq \, \nu\, \|v\|_{H}^2, \ \ \ \forall v \in V^\bot.
\]
We have
\[
B[v,v]:=\int_Q\, a^{(1)}(y)\nabla_y v(y)\cdot \nabla_y v\,dy=
\left\|\left(a^{(1)}(y)\right)^{1/2}\nabla_yv\right\|_2^2\,\geq\,
\]
\[
C\,\left\|\,a^{(1)}(y)\nabla_yv\,\right\|_2^2\,\,\geq\,\,
\nu\, \|v\|_H^2, 
\]
with some $\nu>0$. 
In the last two inequalities we have used, respectively, \eqref{alinfty} and
\eqref{keyass2}.

Therefore, by the Lax-Milgram lemma, there exists a unique solution to the problem
\[
v\in V^\bot: \ \ B[v,w]\,=\,\langle\, F\,,\,w\,\rangle, \ \ \ \forall w\,\in\,V^\bot,
\]
and hence to \eqref{corrgenvar}.

(ii) If $v$ solves \eqref{corrgenvar} and $v_1\in V$ then
$a^{(1)}(y)\nabla_yv_1(y)=0$ and hence $v+v_1$ also solves
\eqref{corrgenvar}.

Assuming further $v^{(1)}$ and $v^{(2)}$ both solve \eqref{corrgenvar}, set
$v=v^{(1)}-v^{(2)}$ solving hence \eqref{corrgenvar} with $F=0$, and then set
$w=v$. As a result,
\[
0\,=\,\int_Q\, a^{(1)}(y)\nabla_y v(y)\cdot \nabla_y v\,dy\,=\,
\left\|\left(a^{(1)}(y)\right)^{1/2}\nabla_yv\right\|_2^2,
\]
implying $\left(a^{(1)}\right)^{1/2}\nabla_yv\,=\,0$ and hence 
%using the symmetry \eqref{symm} of $a^{(1)}(y)$, 
$a^{(1)}\nabla_yv\,=\,0$, i.e. $v\in V$.
\end{proof}

Recalling definition \eqref{wspace2} of the dual space $W$, 
the next important property is a generalization of the Weyl's decomposition, 
%(or Helmholtz decomposition), 
cf. e.g. \cite{JKO,piatn}, for degenerate $a^{(1)}$
satisfying \eqref{keyass}.

\begin{theorem}\label{weyldec}
Let $a^{(1)}$ satisfy \eqref{keyass}, and let $\eta\in \left(L^2_\#(Q)\right)^{n\times d}$.
Suppose $\eta$ is orthogonal in $\left(L^2_\#(Q)\right)^{n\times d}$ to $W$, i.e.
\begin{equation}
\left(\,\eta\,,\, \psi\,\right)_2\,:=\,\int_Q
\eta_{ij}(y)\psi_{ij}(y)dy\,=\,0, \ \ \ \forall \,
\psi\,\in\,W. \label{ortw}
\end{equation}
Then there exists $u_1\in\,\left(H_\#^{1}(Q)\right)^n$ such that
\begin{equation}
\eta(y)\,=\,\left(a^{(1)}(y)\right)^{1/2}\nabla_yu_1(y).
\label{weylrep}
\end{equation}
Such a $u_1$ is determined  uniquely up to an arbitrary function from $V$, in particular
is unique in $V^\bot$.
\end{theorem}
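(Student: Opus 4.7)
The plan is to recast the claim as a closed-range statement. Introduce the bounded linear operator
$$
T:\left(H_\#^1(Q)\right)^n\,\rightarrow\,\left(L^2_\#(Q)\right)^{n\times d},\qquad Tu\,:=\,\left(a^{(1)}(y)\right)^{1/2}\nabla_y u,
$$
well-defined and bounded thanks to \eqref{alinfty}. Our target is then exactly the identity $\eta\in\mathrm{range}(T)$; uniqueness modulo $V$ will be automatic once we show $\ker T=V$. Indeed $Tu=0$ is equivalent to $a^{(1)}(y)\nabla_y u=0$ (pre-multiply by $(a^{(1)})^{1/2}$), which is the defining relation of $V$ in \eqref{herset}. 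So the whole problem reduces to (i) identifying $(\mathrm{range}\,T)^\bot$, and (ii) proving $\mathrm{range}(T)$ is closed in $\left(L^2_\#(Q)\right)^{n\times d}$.

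For (i), a straightforward integration by parts using the pointwise symmetry of $\left(a^{(1)}\right)^{1/2}$ gives, for any $u\in\left(H_\#^1(Q)\right)^n$ and $\psi\in\left(L^2_\#(Q)\right)^{n\times d}$,
$$
\int_Q \left(a^{(1)}(y)\right)^{1/2}\nabla_y u\cdot\psi\,dy\,=\,-\,\left\langle \mathrm{div}_y\bigl(\left(a^{(1)}\right)^{1/2}\psi\bigr),\,u\right\rangle.
$$
Requiring the left-hand side to vanish for every $u$ is thus equivalent to $\mathrm{div}_y\bigl(\left(a^{(1)}\right)^{1/2}\psi\bigr)=0$ in $\left(H_\#^{-1}(Q)\right)^n$, i.e.\ $\psi\in W$ by \eqref{wspace}. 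Hence $(\mathrm{range}\,T)^\bot=W$, and therefore $\overline{\mathrm{range}\,T}=W^\bot$. Since the hypothesis on $\eta$ is precisely $\eta\in W^\bot$, it only remains to verify that $\mathrm{range}(T)$ is already closed.

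The closedness is where the key assumption \eqref{keyass2} enters, and this is the main obstacle. Using the pointwise bound $|a^{(1)}(y)\zeta|^2\leq \|a^{(1)}\|_{L^\infty}\bigl(a^{(1)}(y)\zeta\cdot\zeta\bigr)$, valid for any symmetric nonnegative matrix $a^{(1)}(y)$, we obtain
$$
\left\|a^{(1)}(y)\nabla_y v\right\|_2\,\leq\,C\,\left\|\left(a^{(1)}(y)\right)^{1/2}\nabla_y v\right\|_2\,=\,C\,\|Tv\|_2,\qquad \forall v\in\left(H_\#^1(Q)\right)^n.
$$
Combining this with \eqref{keyass2} yields $\|P_{V^\bot}v\|_{H^1}\leq C\,\|Tv\|_2$. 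Since $T$ annihilates $V$, the induced operator $\widetilde{T}:V^\bot\rightarrow\left(L^2_\#(Q)\right)^{n\times d}$ is therefore bounded below, whence injective with closed range, and $\mathrm{range}(T)=\mathrm{range}(\widetilde{T})$ is closed. Consequently $\mathrm{range}(T)=W^\bot\ni\eta$, producing the desired $u_1\in\left(H_\#^1(Q)\right)^n$ with $\eta=\left(a^{(1)}\right)^{1/2}\nabla_y u_1$. Uniqueness of $u_1$ modulo $V$ follows from $\ker T=V$, and uniqueness in $V^\bot$ from the injectivity of $\widetilde{T}$.
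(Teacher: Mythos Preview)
Your proof is correct and takes a genuinely different route from the paper's. The paper argues constructively: it pre-multiplies the target identity \eqref{weylrep} by $\left(a^{(1)}\right)^{1/2}$ and takes divergence, obtaining the degenerate elliptic problem $-\,\mbox{div}_y\!\left(a^{(1)}\nabla_y u_1\right)=-\,\mbox{div}_y\!\left(\left(a^{(1)}\right)^{1/2}\eta\right)$; it checks the solvability condition \eqref{solvmain}, invokes Theorem~\ref{thmsolv} (Lax--Milgram on $V^\bot$) to produce $u_1$, and then must separately verify, via a direct $L^2$ computation splitting $\|\eta-(a^{(1)})^{1/2}\nabla_y u_1\|_2^2$ into two terms, that this $u_1$ really satisfies \eqref{weylrep} and not merely its divergence. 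Your argument instead packages everything as a closed-range statement for the operator $Tu=\left(a^{(1)}\right)^{1/2}\nabla_y u$: you identify $\ker T=V$ and $(\mathrm{range}\,T)^\bot=W$, and use \eqref{keyass2} to show the induced map on $V^\bot$ is bounded below, hence has closed range. The verification step is then absorbed into the general Hilbert-space fact $\mathrm{range}(T)=\overline{\mathrm{range}(T)}=W^\bot$. Your route is shorter and conceptually transparent, at the cost of appealing to the closed-range principle; the paper's route is more hands-on and has the advantage of reusing the already-proved Theorem~\ref{thmsolv}. Both exploit the key assumption in exactly the same way, namely as a lower bound for $T$ on $V^\bot$.
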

\begin{proof}
Let $\eta$ satisfying \eqref{ortw} be given, and seek $u_1$ such that \eqref{weylrep} holds. 
For $w\in \left(H^1_\#(Q)\right)^n$, multiply \eqref{weylrep} by $\left(a^{(1)}(y)\right)^{1/2}\nabla w(y)$ 
and integrate over $Q$. 
As a result,
\begin{equation}
\int_Q a^{(1)}(y)\nabla_y u_1\cdot\nabla w \,dy \,=\, 
\int_Q \left(a^{(1)}(y)\right)^{1/2}\eta(y)\cdot \nabla w\,dy\,=:\,\langle F,w\rangle. 
\label{kuka1}
\end{equation}
Check that the above defined $F\in
\left(H_\#^{-1}(Q)\right)^n$ satisfies the condition
\eqref{solvmain}. Indeed, 
\beq
\langle\,F\,,\,w\,\rangle\,=\,
\int_Q \eta(y)\cdot \left(a^{(1)}(y)\right)^{1/2}\nabla w(y)\,dy, 
\label{aksk}
\end{equation}
and so if $w\in V$ it follows that $a^{(1)}(y)\nabla w(y)=0$ for a.e. $y$,
and hence  (for a.e. $y$), $\left(a^{(1)}(y)\right)^{1/2}\nabla w(y)=0$.
(Since for any $\xi\in{\mathbb R}^{n\times d}$,  $\left(a^{(1)}(y)\right)^{1/2}\xi=0$
if and only if $a^{(1)}(y)\xi=0$ by the symmetry \eqref{symm} of non-negative $a^{(1)}$.)
This implies that the expressions in \eqref{aksk} vanish, and hence
$ \langle\,F\,,\,w\,\rangle\,=\,0$, i.e. \eqref{solvmain} holds.

Then, by Theorem \ref{thmsolv}, there exists a unique $u_1\in V^\bot$ such that
\eqref{kuka1} holds. Verify that such a $u_1$ satisfies \eqref{weylrep}. We have
\[
\left\|\,\eta(y)\,-\,\left(a^{(1)}(y)\right)^{1/2}\nabla_yu_1(y)\right\|_2^2\,=\,
\left(\eta(y)\,,\,\, \ \eta(y)\,-\,\left(a^{(1)}(y)\right)^{1/2}\nabla_yu_1(y)\right)_2\,-
\]
\begin{equation}
\left(\left(a^{(1)}(y)\right)^{1/2}\nabla_yu_1(y)\,,\,\, \
\eta(y)\,-\,\left(a^{(1)}(y)\right)^{1/2}\nabla_yu_1(y)\right)_2\,
=:S_1\,+\,S_2. \label{dagger}
\end{equation} Now, it follows
from \eqref{kuka1} that
$\psi(y):=\eta(y)\,-\,\left(a^{(1)}(y)\right)^{1/2}\nabla_yu_1(y)\in\,W$
(see \eqref{wspace2}$\,$), and hence, by the assumption
\eqref{ortw} of the theorem, $S_1=0$. On the other hand,
\[
S_2:=\int_Q
\left(a^{(1)}(y)\right)^{1/2}\nabla_yu_1(y)\,\cdot\,\psi(y)\,dy\,=\,
%\]
%\[
\int_Q
\nabla_yu_1(y)\,\cdot\,\left(a^{(1)}(y)\right)^{1/2}\psi(y)\,dy\,
%=:\,-\,\left\langle\,\mbox{div}\left(\left(a^{(1)}(y)\right)^{1/2}\psi(y)\right)\,,\,u_1\,\right\rangle\,
=\, 0
\]
by \eqref{kuka1}. Hence \eqref{dagger} yields
$\left\|\,\eta(y)\,-\,\left(a^{(1)}(y)\right)^{1/2}\nabla_yu_1(y)\right\|_2=0$
implying \eqref{weylrep}.

The above construction also ensures that $u_1$ is determined  uniquely up to any function from $V$,
in particular is unique in $V^\bot$.
\end{proof}

\begin{remark}
If $n=1$ and $a^{(1)}_{1j1q}\equiv \delta_{jq}$, Theorem \ref{weyldec} recovers a classical Weyl's 
decomposition for vector fields in $\left(L^2_\#(Q)\right)^d$ into the sum of a divergence-fee and of a 
potential fields: any vector field $w\in \left(L_\#^2(Q)\right)^d$ 
is uniquely decomposed into the orthogonal sum $w=\psi+\eta$, where $\psi\in W$ and $\eta\in W^\bot$. Now, 
according to \eqref{wspace} with $n=1$ and $a^{(1)}_{1j1q}\equiv \delta_{jq}$, $\mbox{div}_y\psi=0$ i.e. 
$\psi$ is divergence-free, and by the theorem $\eta=\nabla_y u_1$ for some $u_1$ i.e. $\eta$ is a potential field. 
\end{remark} 

The above listed properties, in particular those in Theorems
\ref{thmsolv} and \ref{weyldec}, allow to pass to the limit
in equation \eqref{pde}, equivalently in its weak form
\eqref{weak}, as we execute in the next section.

\section{The two-scale limit problem.}\label{2slimprob}% and resolvent convergence.}

We establish first an important property connecting, under the condition \eqref{keyass}, the generalized two-scale limit flux $\xi_0(x,y)$ to the two-scale limit field $u_0(x,y)$, see \eqref{2sc1}--\eqref{2sc3}. 

We introduce the following set of ``product'' test functions in $L^2(\Omega; W)$. Let $\Psi(x,y)=g(x)\psi(y)$, where 
$g\in C_c^\infty(\overline\Omega)$ and $\psi\in W$, see \eqref{wspace}-\eqref{wspace2}. Here $C_c^\infty(\overline\Omega)$ consists of restrictions to $\Omega$ of all the scalar functions 
in $\Omega$ %which are restrictions to $\Omega$ of functions 
from $C_0^\infty({\mathbb R}^d)$, i.e. of infinitely differentiable functions with a compact support in the whole of ${\mathbb R}^d$. 
%have a bounded support, are infinitely differentiable, and whose every (partial) derivative of any order is 
%uniformly continuous in $\Omega$, cf e.g. \cite{evans}. (Thus all the derivatives can be continuously extended to the closure $\bar\Omega$ of $\Omega$.) 
We note that the linear span of such test functions $\Psi$ is dense in %(Lebesgue-Bochner) space 
$L^2(\Omega; W)$: e.g. an arbitrary $\Psi(x,y)\in L^2(\Omega; W)\subset 
\left(L^2(\Omega\times Q)\right)^{n\times d}$ is approximated 
in $\left(L^2(\Omega\times Q)\right)^{n\times d}$ by 
linear span of $g_m(x)\tilde\psi_m(y)$ with $g_m(x)\in C_0^\infty(\Omega)\subset C_c^\infty(\overline\Omega)$ and 
$\tilde\psi_m(y)\in C^\infty_\#(Q)$, and then setting $\psi_m(y)=P_W\tilde\psi_m(y)$ with $P_W$ denoting 
orthogonal projection on $W$ in $\left(L_\#^2( Q)\right)^{n\times d}$. 
%approximation by simple functions and then smoothing in $x$. 

The following important lemma holds. 

%Let $W$ be defined by \eqref{wspace}. 
%Obviously $W$ is a closed linear subspace of Hilbert space $\left(L^2_\#(Q)\right)^{n\times d}$.
 
%We denote by $P_W$ the orthogonal projector on $W$ with respect to the standard $L^2$ inner product \eqref{ortw}.

%Further, let $L^2_{a^{1/2}}(Q)$ denote
%$the $L^2$ space of vector-functions with "weight" $a^{1/2}$:
%\[
%L^2_{a^{1/2}}(Q)\,:=\,\left\{\,w\in
%\left(L^2(Q)\right)^{n\times d}\, \left\vert\,
%\left\|\left(a^{(1)}(y)\right)^{1/2}w\,\right\|_2\,<\,\infty\,
%\right.\right\}
%\]

%Since, by Lemma \ref{lem22}, $u_0(x,y)\in L^2(\Omega;\,V)$, a
%priori $\nabla_xu_0(x,y)\in H^{-1}(\Omega;\,V^d)$ and hence
%$\left(a^{(1)}(y)\right)^{1/2}\nabla_xu_0(x,y)\in
%H^{-1}\left(\Omega;\,\left(a^{(1)}(y)\right)^{1/2}V^d\right)$.

\begin{lemma}\label{fluxcorr}
Let $u_0(x,y)$ and  $\xi_0(x,y)$ be as in Lemma \ref{lem22},
and let condition \eqref{keyass} hold. Then 
the following integral identity holds: 
 \[
\forall \, \Psi(x,y)=g(x)\psi(y), \ g\in C_c^\infty(\overline\Omega), \psi\in W, \ \ \ \ \ \ 
\int_\Omega\int_Q \xi_0(x,y)\,\cdot\,\Psi(x,y)\,dx\,dy,\,= 
\]
\begin{equation}
-\,\int_\Omega\int_Q
u_0(x,y)\,\cdot\,\mbox{div}_x\left(\,\left(a^{(1)}(y)\right)^{1/2}
\Psi(x,y)\,\right) \,dx\,dy. 
\label{pfl34}
\end{equation}
\end{lemma}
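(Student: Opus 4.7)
The strategy is to test the two-scale convergence \eqref{2sc3} against the rapidly oscillating admissible field $\Psi(x,x/\varepsilon)$ and then integrate by parts in $x$ to transfer the gradient from $u^\varepsilon$ onto $\Psi$. The defining constraint $\Psi(x,\cdot)\in W$, i.e.\ $\mbox{div}_y\bigl((a^{(1)})^{1/2}\Psi\bigr)=0$, is exactly what is needed to cancel the otherwise singular $O(\varepsilon^{-1})$ contribution produced by the two-scale chain rule, leaving only a regular $O(1)$ term to which \eqref{2sc1} can be applied.

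I would first establish \eqref{pfl34} for the smooth subclass $\Psi\in\left(D\bigl(\Omega;\,C_\#^\infty(Q)\bigr)\right)^{n\times d}$ subject to the pointwise constraint $\mbox{div}_y\bigl((a^{(1)}(y))^{1/2}\Psi(x,y)\bigr)=0$ for every $x\in\Omega$. Setting
\[
I_\varepsilon\,:=\,\int_\Omega \bigl(a^{(1)}(x/\varepsilon)\bigr)^{1/2}\nabla u^\varepsilon(x)\cdot\Psi(x,x/\varepsilon)\,dx,
\]
I use the symmetry of $(a^{(1)})^{1/2}$ inherited from \eqref{symm} to rewrite $I_\varepsilon$ as $\int_\Omega\nabla u^\varepsilon\cdot(a^{(1)}(x/\varepsilon))^{1/2}\Psi(x,x/\varepsilon)\,dx$. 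Since $u^\varepsilon\in(H_0^1(\Omega))^n$ and $\Psi$ has compact $x$-support, integration by parts in $x$ gives
\[
I_\varepsilon\,=\,-\int_\Omega u^\varepsilon(x)\cdot\mbox{div}_x\Bigl[\bigl(a^{(1)}(x/\varepsilon)\bigr)^{1/2}\Psi(x,x/\varepsilon)\Bigr]\,dx.
\]
Applied to $F(x,y):=(a^{(1)}(y))^{1/2}\Psi(x,y)$, the two-scale chain rule splits the bracket into a slow part $(\mbox{div}_x F)(x,x/\varepsilon)$ and a fast part $\varepsilon^{-1}(\mbox{div}_y F)(x,x/\varepsilon)$. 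The fast part vanishes identically by the constraint on $\Psi$; and since $(a^{(1)})^{1/2}$ is $x$-independent, the slow part equals $(a^{(1)}(x/\varepsilon))^{1/2}\mbox{div}_x\Psi(x,x/\varepsilon)$. Sending $\varepsilon\to 0$, the left-hand side of the resulting identity converges by \eqref{2sc3} to $\int_\Omega\int_Q\xi_0\cdot\Psi\,dx\,dy$, while the right-hand side converges by \eqref{2sc1} to $-\int_\Omega\int_Q u_0(x,y)\cdot(a^{(1)}(y))^{1/2}\mbox{div}_x\Psi(x,y)\,dx\,dy$. Re-absorbing $(a^{(1)})^{1/2}$ into $\mbox{div}_x$ (it carries no $x$-dependence) produces \eqref{pfl34}.

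It remains to extend the identity from this smooth subclass to all $\Psi\in C^\infty(\Omega;W)$ (of compact $x$-support, so that both integrals converge) by a density argument. Both sides are continuous in $\Psi$ with respect to the $L^2$ norms of $\Psi$ and $\mbox{div}_x\Psi$, so the extension reduces to approximating elements of $W$ in $L^2_\#(Q)$ by smooth elements of $W$. This is the main obstacle: naive mollification in $y$ does not preserve $\mbox{div}_y\bigl((a^{(1)})^{1/2}\Psi\bigr)=0$, because $(a^{(1)})^{1/2}$ depends on $y$ and does not commute with the mollifier. A natural remedy is to mollify $\Psi$ in $y$ and then orthogonally project the result back onto $W$ in $(L^2_\#(Q))^{n\times d}$; by the generalized Weyl decomposition of Theorem \ref{weyldec}, the projection error is of the form $(a^{(1)})^{1/2}\nabla_y u_1$, with $u_1$ recovered via Theorem \ref{thmsolv} applied to a commutator-type right-hand side that vanishes in $(H^{-1}_\#(Q))^n$ as the mollification scale tends to zero.
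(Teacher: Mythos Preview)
Your core computation---test \eqref{2sc3} against $\Psi(x,x/\varepsilon)$, integrate by parts, use the $W$-constraint to annihilate the $\varepsilon^{-1}$ fast term, then apply \eqref{2sc1}---is exactly the paper's argument. The difference is that the paper does not route through a smooth-in-$y$ subclass at all: it runs the computation directly for $\Psi\in C^\infty(\Omega;W)$. This relies on two standard facts from two-scale convergence theory: test functions that are smooth in $x$ and merely $L^2_\#$ in $y$ are admissible for \eqref{2sc1} and \eqref{2sc3}; and the distributional constraint $\mbox{div}_y\bigl((a^{(1)})^{1/2}\Psi\bigr)=0$ in $\left(H^{-1}_\#(Q)\right)^n$ already suffices for the integration by parts to return only the slow divergence (one clean justification is Fourier truncation in $y$ of $G:=(a^{(1)})^{1/2}\Psi$, which preserves $\mbox{div}_yG=0$ mode by mode while approximating $G$ in $L^2_\#$). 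In particular, the paper's proof never invokes condition \eqref{keyass}; it appears in the statement only as a standing hypothesis for the section.

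Your density step, by contrast, has a genuine gap. Mollifying $\Psi$ in $y$ and projecting back onto $W$ yields $P_W\Psi_\delta$, which does converge to $\Psi$ in $L^2_\#$---but $P_W\Psi_\delta$ is \emph{not} smooth in $y$, since the $L^2$-projection onto $W$ does not improve $y$-regularity when $(a^{(1)})^{1/2}$ is only $L^\infty$. Hence the approximants do not lie in the subclass $D\bigl(\Omega;C^\infty_\#(Q)\bigr)$ with the $W$-constraint on which you established the identity, and you have not shown (nor is it true in general for rough $a^{(1)}$) that smooth-in-$y$ elements of $W$ are dense in $W$. Note also that even in your ``smooth subclass'' the constraint $\mbox{div}_y\bigl((a^{(1)})^{1/2}\Psi\bigr)=0$ can only be meant distributionally, since $(a^{(1)})^{1/2}\Psi$ is no better than $L^\infty$ in $y$; so the chain-rule step there needs the same justification as in the general case, and the detour gains nothing. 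The fix is simply to drop it and argue directly as the paper does.
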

\begin{remark}
Notice that, importantly, in \eqref{pfl34} $\Psi$ is {\it not} in $C_0^\infty(\Omega)$ in $x$ and 
so may adopt non-zero values on the boundary $\partial\Omega$ of 
$\Omega$.  In this respect, 
\eqref{pfl34} encodes in some sense `boundary conditions' for $u_0(x,y)$, $x\in\partial\Omega$, which may remain inherited for degenerate $a^{(1)}$ in the limit 
$\varepsilon \to 0$ from the zero Dirichlet boundary conditions for 
$u^\varepsilon\in \left(H_0^1(\Omega)\right)^n$ in \eqref{pde}.  
\end{remark}
\begin{proof}
Let  $\Psi(x,y)=g(x)\psi(y)$, where $g\in C_c^\infty(\overline\Omega)$, $\psi\in W$, and $W$ is defined by \eqref{wspace2}.
Then, by \eqref{2sc3},
\begin{equation}
\lim_{\varepsilon \to 0} \int_\Omega
\left(a^{(1)}\left(\frac{x}{\varepsilon}\right)\right)^{1/2}\nabla u^\varepsilon(x)
\,\cdot\, \Psi\left(x, \frac{x}{\varepsilon}\right) \,dx\,=\,
\int_\Omega\int_Q \xi_0(x,y)\,\cdot\,\Psi(x,y)\,dx\,dy.
\label{pfl31}
\end{equation}
On the other hand, 
\[
%I_\varepsilon:=\,
\int_\Omega
\left(a^{(1)}\left(\frac{x}{\varepsilon}\right)\right)^{1/2}\nabla u^\varepsilon(x)
\,\cdot\, \Psi\left(x, \frac{x}{\varepsilon}\right) \,dx\,=\, 
\]
\begin{equation}
\int_\Omega \nabla\biggl(g(x) u^\varepsilon(x)\biggr)\cdot 
\left(a^{(1)}\left(\frac{x}{\varepsilon}\right)\right)^{1/2} \psi\left( \frac{x}{\varepsilon}\right) dx
-\int_\Omega\, u^\varepsilon(x)\cdot\left.\left(\mbox{div}_x\left(\left(a^{(1)}(y)\right)^{1/2}
\Psi(x,y)\right)\right)\right\vert_{y=x/\varepsilon}dx.
\label{pfl320}
\end{equation}
We notice first that, for any fixed $\varepsilon>0$, the first term on the right hand side %of \eqref{pfl320} 
is zero. 
This follows e.g. from extending $g(x)u^\varepsilon(x)$ by zero outside $\Omega$ and then applying partition of unity arguments and using \eqref{wspace2}. 
%Indeed, denoting $\eta(y):= \left(a^{(1)}(y)\right)^{1/2} \psi\left( y\right)\in \left(L^2(Q)\right)^{n\times m}$ and noticing that 
%(in the sense of distributions on periodic torus $Q$, see \eqref{wspace}) $\mbox{div}_y\eta=0$, we approximate $\eta$ by $\eta^\delta(y)\in C^\infty_{per}$ so that 
%$\mbox{div}_y\eta^\delta=0$ and $\eta^\delta \rightarrow \eta$ as $\delta \to 0$ (for example via convolution with a standard mollifying 
%function $\zeta_\delta(y)=\delta^{-d}\zeta(y/\delta)$, where $\zeta(z)\in C_0^\infty\left({\mathbb R}^d\right)$ and $\int_{\mathbb R}^d \zeta(z) dz=1$). 
%Then, integrating by parts and using $u^\varepsilon\in \left(H_0^1(\Omega)\right)^n$ gives (for a fixed $\varepsilon>0$)
%\[
%\int_\Omega \nabla\Bigl(g(x)u^\varepsilon(x)\Bigr)\cdot \eta\left(\frac{x}{\varepsilon}\right)\,dx\,=\, 
%\lim_{\delta \to 0} \int_\Omega \nabla\Bigl(g(x) u^\varepsilon(x)\Bigr)\cdot \eta^\delta\left(\frac{x}{\varepsilon}\right)\,dx \,=
%\]
%\[
%-\,\lim_{\delta \to 0}\int_\Omega \varepsilon^{-1} g(x)u^\varepsilon(x)\cdot 
%\left.\left(\mbox{div}_y \eta^\delta(y)\right)\right\vert_{y=x/\varepsilon}dx\,=\,0. 
%\]
Hence \eqref{pfl320} gives 
\begin{equation} 
\int_\Omega
\left(a^{(1)}\left(\frac{x}{\varepsilon}\right)\right)^{1/2}\nabla u^\varepsilon(x)
\cdot \Psi\left(x, \frac{x}{\varepsilon}\right) dx=
-\int_\Omega\, u^\varepsilon(x)\cdot\left.\left(\mbox{div}_x\left(\left(a^{(1)}(y)\right)^{1/2}
\Psi(x,y)\right)\right)\right\vert_{y=x/\varepsilon}dx, 
\label{pfl32}
\end{equation} 
and passing to the limit as $\varepsilon \to 0$ and using \eqref{2sc1} then yields
\[
\lim_{\varepsilon \to 0} \int_\Omega
\left(a^{(1)}\left(\frac{x}{\varepsilon}\right)\right)^{1/2}\nabla u^\varepsilon(x)
\,\cdot\, \Psi\left(x, \frac{x}{\varepsilon}\right) \,dx\,=\,
\]
\begin{equation}
-\,\,\int_\Omega\int_Q
u_0(x,y)\,\cdot\,\mbox{div}_x\left(\,\left(a^{(1)}(y)\right)^{1/2}
\Psi(x,y)\,\right) \,dx\,dy. \label{pfl33}
\end{equation}
Comparing \eqref{pfl31} and \eqref{pfl33} results in identity \eqref{pfl34}. 
\end{proof} 
\vspace{.1in}

The identity \eqref{pfl34} encodes the relation between the generalized limit flux $\xi_0(x,y)$ and 
the limit field $u_0(x,y)$. 
Motivated by \eqref{pfl34}, we introduce the following linear subspace $U$ of Hilbert space $L^2(\Omega;V)$: 
\begin{eqnarray} 
 U\,:=\,\Biggl\{\, u(x,y)\in L^2\left(\Omega;\,V\right)\,\Biggr\vert \,
%P_W\left[\left(a^{(1)}(y)\right)^{1/2}\nabla_x u(x,y)\,
%L^2\left(\Omega;\, \left(L^2_\#(Q)\right)^{n\times d}\,\right)\,\right. 
\exists\, \xi(x,y)\in L^2(\Omega; W)\,\,\mbox{such that, }\,\, \nonumber\\ 
%\eqref{pfl34}\,\,\mbox{holds }\, 
%\forall \Psi(x,y)\in C^\infty(\Omega;\,W),\,\, \mbox{holds }
\forall \, \Psi(x,y)=g(x)\psi(y), \ g\in C_c^\infty(\overline\Omega), \psi\in W, \ \ \ \ \ \ 
\int_\Omega\int_Q \xi(x,y)\,\cdot\,\Psi(x,y)\,dx\,dy\,= \nonumber\\ 
\left. 
-\,\int_\Omega\int_Q
u(x,y)\,\cdot\,\mbox{div}_x\left(\,\left(a^{(1)}(y)\right)^{1/2}
\Psi(x,y)\,\right) dxdy  \ \ 
%\label{pfl34}
\right\}. \ \ \ \ \ \ 
\label{pfl341}
\end{eqnarray}
Obviously, by Lemma \ref{fluxcorr}, $u_0(x,y)\in U$. We will see that $U$ forms a domain for the sesquilinear 
form for the two-scale limit operator, and so can be viewed as a two-scale generalization of $H_0^1(\Omega)$ 
in the classical homogenization. 

For any $u(x,y)\in U$, the associated $\xi(x,y)$ in \eqref{pfl341} is found uniquely due to the density of (linear span of) $\Psi(x,y)=g(x)\psi(y)$ 
in $L^2(\Omega; W)$. Let $T$ denote the corresponding linear operator, $T:U \rightarrow L^2(\Omega; W)$, $\,Tu:=\xi$. 
 Denote by $P_W$ the orthogonal projector on $W$ with respect to the standard $L^2$ inner product \eqref{ortw}. 
Then, bearing in mind the definition  of $T$ and formally for a moment 
integrating by parts in  \eqref{pfl341}, it can be symbolically written as 
\begin{equation}
\xi(x,y)\,=\,Tu(x,y)\,=:\,P_W\left[\left(a^{(1)}(y)\right)^{1/2}\nabla_x u(x,y)\,\right]\,\in\,L^2(\Omega;\, W). 
\label{toper}
\end{equation} 
We emphasize that the writing in \eqref{toper} is in general formal: for $u\in U$, $\left(a^{(1)}(y)\right)^{1/2}\nabla_x u(x,y)$ is {\it not} generally in 
$\left(L^2(\Omega\times Q)\right)^{n\times d}=L^2\left(\Omega; \left(L^2(Q)\right)^{n\times d}\right)$.

Introduce now another set of `product' test functions in $U$, smooth in $x$: $\phi_0(x,y)=\eta(x)v(y)$ so that $\eta\in C_0^\infty(\Omega)$ and $v\in V$, see \eqref{herset}. 
It is easy to see that $\phi_0(x,y)\,\in\,U$, and 
%for $\phi(x,y)\in C_0^\infty(\Omega;V)$ 
the corresponding $T\phi_0\in L^2 %C_0^\infty
(\Omega;W)$ 
is determined, via integration by parts in \eqref{pfl341}, by \eqref{toper} now in the pointwise sense in $x$. 
Further, the following ``corrector'' property holds:  
\begin{proposition}\label{pwcorr}
Let $\phi_0(x,y)= %\in C_0^\infty(\Omega;V)$. 
\eta(x)v(y)$, where $\eta\in C_0^\infty(\Omega)$ and $v\in V$. 
Then $\phi_0\in U$, and 
there exists a unique ``corrector'' $\phi_1(x,y)\in\,%C_0^\infty
L^2\left(\Omega; V^\bot\right)$ such that 
\begin{equation}
T\phi_0(x,y)\,
=\,P_W\left[\left(a^{(1)}(y)\right)^{1/2}\nabla_x \phi_0(x,y)\,\right]
=\,\left(a^{(1)}(y)\right)^{1/2}\biggl[\,\nabla_x\phi_0(x,y)\,+\,\nabla_y\phi_1(x,y)\,\biggr].
\label{xi0form}
\end{equation}
Here, for %(almost) 
all $x\in\Omega$, $\phi_1(x,y)\in\,%C_0^\infty\left(\Omega; 
V^\bot$ is a unique solution of the corrector problem 
\begin{equation}
\mbox{\text{div}}_y\left(
a^{(1)}(y)\biggl[\,\nabla_x\phi_0(x,y)\,+\,\nabla_y\phi_1(x,y)\,\biggr] \right)\,=\,0,
\label{corrprob}
\end{equation} 
equivalently, 
\begin{equation}
\int_Q 
a^{(1)}(y)\biggl[\,\nabla_x\phi_0(x,y)\,+\,\nabla_y\phi_1(%x,
y)\,\biggr] \cdot 
\nabla_y\psi(y)dy\,=\,0, \ \ \ \forall \psi\in %L^2\left(\Omega; 
\left(H^1_\#(Q)\right)^n. %\right). 
\label{corrprob2}
\end{equation} 
\end{proposition}
\begin{proof}
Let $\phi_0(x,y)= \eta(x)v(y)$, $\eta\in C_0^\infty(\Omega)$ and $v\in V$.  %\in C_0^\infty(\Omega;V)$, and 
For every fixed $x\in\Omega$, consider the problem \eqref{corrprob2}. 
It follows from Theorem \ref{thmsolv} with 
$\langle F,w\rangle=\,-\,\int_Q a^{(1)}(y)  \nabla_x\phi_0(x,y) \cdot \nabla_yw(y)dy$ 
that \eqref{solvmain} holds and hence  
 \eqref{corrprob} has a unique solution 
$\phi_1(x,\cdot)\in V^\bot$. 
Denoting 
\begin{equation}
\xi(x,y):= \,\left(a^{(1)}(y)\right)^{1/2}\left[ \nabla_x\phi_0(x,y)\,+\,\nabla_y\phi_1(x,y)\,\right], 
\label{xismooth}
\end{equation} 
we notice that $\xi(x,\cdot)\in W$, $\forall x$, by \eqref{corrprob2}, cf. \eqref{wspace}--\eqref{wspace2}, 
and noticing the smooth dependence on $x$,  $\xi(x,y)\in L^2(\Omega;W)$. 
Then the identity in \eqref{pfl341} for $u=\phi_0$ follows from integration by parts in $x$, \eqref{xismooth}, 
%integration by parts in $y$, 
the fact that $\Psi(x,\cdot)\in W$, and \eqref{wspace}-\eqref{wspace2}. 

This implies $\phi_0\in U$, and $T\phi_0=\xi$, yielding \eqref{xi0form}.  
%\begin{equation}
%\eta(x,y)\,:=\,T\phi_0(x,y)\,-\,
%\left(a^{(1)}(y)\right)^{1/2}\,\nabla_x\phi_0(x,y)\,\in\,
%C_0^\infty\left(\Omega;\,\left(L^2_\#(Q)\right)^{n\times d}\right). 
%\label{etacorr}
%\end{equation}
%It follows from \eqref{pfl341} by integration by parts and the density of $\Psi(x,y)$
%that $\eta(x,y)\in\,C_0^\infty\left(\Omega;\,W^\bot\right)$, i.e. 
%$\eta(x,\,\cdot\,)$ is 
%orthogonal to $W$ (in variable $y$ with $x$ a parameter). Then, by Theorem
%\ref{weyldec} applied for every $x\in\Omega$, there exists a unique $\phi_1(x,y)\in
%C_0^\infty\left(\Omega;\,V^\bot\,\right)$ 
%of \eqref{corrprob}, 
%such that $\eta(x,y)=
%\left(a^{(1)}(y)\right)^{1/2}\,\nabla_y\phi_1(x,y)$. The latter
%yields \eqref{xi0form}. 
%Since $T\phi_0(x,y)\in C_0^\infty(\Omega; W)$, it follows from \eqref{etacorr} and 
%\eqref{wspace} that, for all $x\in\Omega$, \eqref{corrprob} holds.  
\end{proof}
\vspace{.15in} 

One can now pass to the limit in the weak form \eqref{weak} of the original equation as follows. 
Let $f^\varepsilon \stackrel{2}\rightharpoonup f_0(x,y)\in L^2\left(\Omega; \left(L^2(Q)\right)^n\right)$.  
We take as a test function in \eqref{weak} 
$\phi(x)=\phi^\varepsilon(x)=\,\phi_0\left(x,\frac{x}{\varepsilon}\right)$,   
%+\varepsilon\phi_1\left(x,\frac{x}{\varepsilon}\right)$, 
where $\phi_0(x,y)= \eta(x)v(y)$, $\eta\in C_0^\infty(\Omega)$ and $v\in V$.  
%\in C_0^\infty\left(\Omega; V  \right)$ 
%and $\phi_1(x,y)=T\phi_0(x,y)$.
%\in \left(D\left(\Omega; V^\bot  \right)\right)^{n}$. 
The use of \eqref{aeps}, \eqref{2sc1}--\eqref{2sc3}, \eqref{pfl34}, \eqref{toper}, 
and 
%Proposition \ref{pwcorr} 
\eqref{xi0form} 
%\eqref{apest1}--\eqref{apest3}, \eqref{xi0form} and the chosen 
%properties of $\phi_0(x,y)$ and $\phi_1(x,y)$ 
results in the following limit form for \eqref{weak}: 
%weak form limit equation for 
%\eqref{weak}: 
%equation for $\left(\,u_0(x,y),\,u_1(x,y)\,\right)$: 
\begin{eqnarray}
\int_\Omega\int_Q& \biggl\{\,
Tu_0(x,y)\,\cdot\,T\phi_0(x,y)\,\,\,\, \ 
%P_W\left[\left(a^{(1)}(y)\right)^{1/2}\nabla_x u_0(x,y)\,\right]
%a^{(1)}(y)\Bigl[\,\nabla_xu_0(x,y)\,+\,\nabla_yu_1(x,y)\,\Bigr]
%\cdot P_W\left[\left(a^{(1)}(y)\right)^{1/2} \nabla_x\phi_0(x,y)\,\right]
%\Bigl[\,\nabla_x\phi_0(x,y)\,+\,\nabla_y\phi_1(x,y)\,\Bigr]
\,+\,
%\biggr.
\nonumber\\
\biggl.&a^{(0)}(y)\nabla_yu_0(x,y)\cdot\nabla_y\phi_0(x,y)\,+\,
\lambda\,\rho(y)u_0(x,y)\cdot\phi_0(x,y)\,\biggr\}dy\,dx\,=\,\nonumber\\
&\int_\Omega\int_Q\,\rho(y)f_0(x,y)\cdot\phi_0(x,y)\,dy\,dx, 
\ \ \ \ \forall \phi_0(x,y)=\eta(x)v(y), \ \eta\in C_0^\infty(\Omega), \, v\in V.
\nonumber\\
%\in C_0^\infty\left(\Omega; V  \right).
\label{limweak}
\end{eqnarray}

Integral identity \eqref{limweak} can be viewed as a weak form for the limit problem for $u_0(x,y)\,\in\,U$. 
To argue that this is a well-posed problem we first introduce the following sesquilinear quadratic form on $U$: 
\begin{eqnarray}
Q(u,w)\,:=\,
\int_\Omega\int_Q& \biggl\{\,
Tu(x,y) \,
%P_W\left[\left(a^{(1)}(y)\right)^{1/2}\nabla_x u(x,y)\,\right]
\cdot
\, \overline{Tw(x,y)} 
%P_W\left[\left(a^{(1)}(y)\right)^{1/2} \overline{\nabla_x w}(x,y)\,\right]
\,+\, & \, \ \ \ \ 
\nonumber\\
\biggl.&a^{(0)}(y)\nabla_yu(x,y)\cdot\overline{\nabla_yw(x,y)}\,+\, &
\rho(y)u(x,y)\cdot\overline{ w(x,y)}\,\biggr\}dy\,dx. 
\nonumber\\ 
\label{limform}
\end{eqnarray}
The form $Q$ defines an inner product on $U$. 
\begin{lemma}\label{qclosed}
Form $Q$ is closed on $U$. Hence $U$ is a Hilbert space with inner product 
\begin{equation}
\langle u,\,w\rangle_U\,:=\,Q(u,\,w). 
\label{Uinner}
\end{equation}
\end{lemma}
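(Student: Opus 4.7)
The plan is to establish two things: that $Q$ is positive definite on $U$ (so \eqref{Uinner} genuinely defines an inner product), and that the induced norm makes $U$ complete.

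Positive definiteness is essentially for free. For any $u\in U\subset L^2(\Omega;V)$ one has $a^{(1)}(y)\nabla_y u(x,y)=0$ a.e., whence $\int_Q a^{(0)}\nabla_y u\cdot\nabla_y u\,dy = \int_Q(a^{(0)}+a^{(1)})\nabla_y u\cdot\nabla_y u\,dy \geq \int_Q A^{(0)}\nabla_y u\cdot\nabla_y u\,dy\geq 0$ by \eqref{coerc1}. The remaining two terms of $Q(u,u)$ are manifestly non-negative, and $\int\!\!\int \rho|u|^2\geq\nu\|u\|^2_{L^2(\Omega\times Q)}$ by \eqref{rhobd}, so $Q(u,u)=0$ forces $u=0$.

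The crux is the norm-equivalence estimate, valid on $U$:
\[
Q(u,u)\;\geq\;\|Tu\|^2_{L^2(\Omega;W)}\;+\;\nu\,\|u\|^2_{L^2(\Omega;\,H^1_\#(Q))}.
\]
To obtain this I would first prove, by Plancherel on the torus, that $\int_Q A^{(0)}\nabla_y v\cdot\overline{\nabla_y v}\,dy\geq\nu\,\|\nabla_y v\|^2_{L^2(Q)}$ for every $v\in H^1_\#(Q)$. Expanding $v$ in Fourier series and invoking \eqref{coerc2} yields $A^{(0)}_{ijpq}\xi_j\xi_q\hat v_i(\xi)\overline{\hat v_p(\xi)}\geq\nu|\xi|^2|\hat v(\xi)|^2$ for each mode $\xi$, the would-be imaginary cross-terms cancelling by the symmetry \eqref{symm}. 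Combined with the pointwise-in-$x$ inequality noted above, and the lower bound on $\rho$, integration in $x$ yields the claimed estimate. Given the estimate, a $Q$-Cauchy sequence $\{u_n\}\subset U$ is automatically Cauchy in $L^2(\Omega;H^1_\#(Q))$ with some limit $u$, and $\{Tu_n\}$ is Cauchy in $L^2(\Omega;W)$ with some limit $\xi$. Since $V$ is closed in $H^1_\#(Q)$ (the kernel of the bounded map $v\mapsto a^{(1)}\nabla_y v$) and $W$ is closed in $(L^2_\#(Q))^{n\times d}$ by \eqref{wspace}, we have $u\in L^2(\Omega;V)$ and $\xi\in L^2(\Omega;W)$.

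To place $u$ in $U$ with $Tu=\xi$, I would write the defining identity of \eqref{pfl341} for $(u_n,Tu_n)$ tested against an arbitrary $\Psi\in C^\infty(\Omega;W)$ and pass to the limit $n\to\infty$: both sides are continuous linear functionals of the respective strongly convergent $L^2$-sequences, so the limit identity reads the same for $(u,\xi)$, which identifies $u\in U$ and (by the uniqueness noted after \eqref{pfl341}) $Tu=\xi$. Finally $Q(u_n-u,u_n-u)\to 0$ term-by-term, using linearity of $T$ and the established $L^2$-convergences, completing the completeness argument.

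The principal obstacle is the coercivity of $A^{(0)}$ on $H^1_\#(Q)$ under only the Legendre--Hadamard condition \eqref{coerc2}: for genuinely $y$-dependent coefficients one typically obtains only G\aa rding's inequality with a compact lower-order perturbation, which would complicate the completeness argument. What rescues the situation is that $A^{(0)}$ is a \emph{constant} tensor, so Plancherel on the torus delivers true $H^1_\#$-coercivity directly; everything else is routine closed-subspace and strong-limit bookkeeping.
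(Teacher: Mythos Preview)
Your proof is correct and follows the same route as the paper: a $Q$-Cauchy sequence yields $L^2(\Omega;V)$- and $L^2(\Omega;W)$-Cauchy sequences, the limits exist by completeness, and passing to the limit in the defining identity \eqref{pfl341} places the limit in $U$ with the right $T$. The one place you are more careful than the paper is in justifying that $Q$ controls the $H^1_\#(Q)$-norm in $y$: the paper simply asserts this ``according to \eqref{limform}'', whereas you correctly observe that \eqref{coerc1}--\eqref{coerc2} give only a Legendre--Hadamard (not Legendre) lower bound and then supply the missing step via Plancherel on the torus, exploiting that $A^{(0)}$ is constant.
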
  
\begin{proof}
Let $u_j$, $j=1,2,...$, be a Cauchy sequence in $U$, i.e. $\|u_j-u_k\|_U\to 0$ as $j,k\to\infty$, 
where 
\begin{equation}
\|\,u\,\|^2_U\,:=\,Q(u,u).
\label{qUnorm} 
\end{equation} 
 Let $\xi_j:=Tu_j$. Then, according to \eqref{limform}, \eqref{rhobd}, \eqref{alinfty} and 
\eqref{coerc1}\footnote{Notice that assumption \eqref{coerc1} implies similar inequality in 
$\left(H^1_\#(Q)\right)^n$ with the integral over ${\mathbb R}^d$ replaced by integral over $Q$, 
$u\cdot u(y)$ added to the integrand on the left, and the norm on the right replaced by the 
$H^1_\#(Q)$-norm, with some constant $C>0$. This can be seen by e.g. multiplying an infinitely 
periodic $u\in \left(H^1_\#(Q)\right)^n$ by a smooth cut-off function $\chi_R(y)$ such that $\chi_R=1$ 
for $|y|\leq R$, $\chi_R=0$ for $|y|>R+1$ and $|\nabla\chi_R(y)|\leq C$, and taking $R$ large enough.},  
$\|u_j-u_k\|_{L^2(\Omega;V)}\to 0$ and $\|\xi_j-\xi_k\|_{L^2(\Omega;W)}\to 0$. 
Since both $L^2(\Omega;V)$ and $L^2(\Omega;W)$ are complete, there exist $\tilde u\in L^2(\Omega;V)$ 
and $\tilde\xi\in L^2(\Omega;W)$ such that, respectively, $u_j\to \tilde u$ in $ L^2(\Omega;V)$ and 
 $\xi_j\to \tilde\xi$ in $ L^2(\Omega;W)$. 
%It remains to prove that $\tilde u\in U$ and $\tilde\xi=T\tilde u$. 
Taking then arbitrary $\Psi(x,y)=g(x)\psi(y), \ g\in C_c^\infty(\overline\Omega), \psi\in W$,  
%\in C^\infty(\Omega; W)$, 
one passes to the limit as $j\to \infty$ in 
both the left hand side and the right hand side of \eqref{pfl341} (held with $\xi$ and $u$ replaces by 
$\xi_j$ and $u_j$, respectively, by the definition of $u_j\in U$ and of 
$\xi_j=Tu_j$). Hence \eqref{pfl341} also holds for 
$\tilde\xi$ and $\tilde u$, and therefore $\tilde u\in U$, $\tilde\xi=T\tilde u$, and  $\|u_j - \tilde u\|_U\to 0$, 
which completes the proof. 
\end{proof}

We ultimately need to show that the linear span of the set of test functions $\phi_0$ adopted in \eqref{limweak} is dense 
in $U$ with respect to the norm \eqref{qUnorm}. With this aim, we first introduce a wider set of `smooth compactly supported in $x$' trial fields $\phi_0(x,y)\in U$ 
for which \eqref{limweak} still holds. 

\begin{definition}
Consider all $\phi(x,y)\in L_c^2(\Omega;V)$ of functions from $L^2(\Omega;V)$ with a compact (in $x$) 
support $\mbox{supp}_x \phi$ in $\Omega$. Let a scalar function $\zeta\in C_0^\infty\left({\mathbb R}^d\right)$ be such 
that $\mbox{diam}(\mbox{supp}\, \zeta)< \mbox{dist} (\mbox{supp}_x \phi, \partial\Omega)$, and consider an ``$x$-smoothed'' function 
\begin{equation}
\phi_0(x,y)\,=\, \int_{{\mathbb R}^d} \zeta(x-x')\phi(x',y) dx'  
\label{c0infv}
\end{equation}
(which is still in $L^2(\Omega;V)$ and has a compact support in $x$). 
We denote by $\tilde C_0^\infty(\Omega;V)$ the linear span of all such functions $\phi_0(x,y)$. 
\end{definition} 

\begin{lemma}\label{lemmactild}
(i) All $\phi_0\in \tilde C_0^\infty(\Omega;V)$ belong to $U$, with 
\begin{equation}
T\phi_0(x,y)\,=\,\int_{{\mathbb R}^d} T\left(\zeta(x-x')\phi(x',y)\right) dx'. 
\label{Tc0inf}
\end{equation} 
(ii) The identity \eqref{limweak} holds for the extended set of trial fields $\phi_0\in \tilde C_0^\infty(\Omega;V)$.
\end{lemma} 
\begin{proof}

(i) Let $\phi_0\in \tilde C_0^\infty(\Omega;V)$ be associated with $\phi(x,y)\in L_c^2(\Omega;V)$ via \eqref{c0infv}. %, and let $x'\in \mbox{supp}_x\phi$. 
Then, regarding $x'\in\Omega$ as a parameter, for almost every $x'\in\Omega$, $\phi_0(x,y;x')=\zeta(x-x')\phi(x',y)$ is of the product form as in Proposition \ref{pwcorr}: 
$\phi_0(x,y;x')=\eta(x)v(y)$ where for $x'\in \mbox{supp}_x\phi$, $\eta(x)=\zeta(x-x')$ and $v(y)=\phi(x',y)$, and $\eta\equiv v\equiv 0$ 
otherwise. Hence, by Proposition \ref{pwcorr}, \eqref{pfl341} holds for a.e. $x'$ with $u(x,y)=\phi_0(x,y;x')$ and some 
$\xi(x,y;x')=T\left(\zeta(x-x')\phi(x',y)\right)$. 
Integration of \eqref{pfl341} with respect to the parameter $x'$ then yields $\phi_0\in U$, with $T\phi_0(x,y)$ given by \eqref{Tc0inf}. 

(ii) Similarly, \eqref{limweak} holds for almost every $x'\in\Omega$ with $\phi_0(x,y)=\phi_0(x,y;x')$ and $T\phi_0(x,y)=\xi(x,y;x')$ 
constructed above. Hence, integrating \eqref{limweak} in $x'$, the result follows. 
\end{proof}

The following key property will be proved for star-shaped bounded domains $\Omega$, 
%or $\Omega={\mathbb R}^d$, 
although  it can similarly be shown to be valid for rather general domains (see Remark \ref{remepigr} below). 

\begin{definition}
We call a domain $\Omega$ strictly star-shaped (with respect to the origin $x=0\in\Omega$) if, for all small $\delta>0$, 
$\mbox{dist}\left( (1-\delta)\Omega, \,\partial\Omega\right)>0$. 
\end{definition} 

\begin{theorem}\label{cinfdense}
Let $\Omega$ be a  strictly star-shaped bounded domain. %, or $\,\Omega={\mathbb R}^d$. 
Then $U$ is the closure of $\tilde C_0^\infty(\Omega; V)$ 
%the linear span of all functions 
%$\phi_0(x,y)=\eta(x)v(y)$, where $\eta\in C_0^\infty(\Omega)$ and $v\in V$, 
in the norm $\|\cdot\|_U$, see \eqref{qUnorm}. 
\end{theorem} 
\begin{proof}
1. Let $\Omega$ be a bounded domain, strictly star-shaped with respect to origin $O$. Fix $u(x,y)\in U$, let $\xi(x,y)=Tu(x,y)\in L^2(\Omega;W)$, 
and regard both $u$ and $\xi$ as  
functions on the whole  ${\mathbb R}^d$ in $x$ by extending them outside $\Omega$ by zero. 
We aim at constructing a sequence $u_\delta\in \tilde C_0^\infty(\Omega; V)$ such that 
$u_\delta \to u$ in $U$ as $\delta \to 0$. 

To this end, 
for any small $\delta>0$, let $\Omega_\delta:=(1-\delta)\Omega$ and denote 
$d(\delta):=\mbox{dist}\left(\Omega_\delta, \partial\Omega\right)>0$. 
Let $\hat u_\delta(x,y):=u(x/(1-\delta), y)$. Obviously, $\hat u_\delta\in L^2(\Omega,V)$ and the support of $\hat u_\delta$ is contained in 
$\overline{\Omega_\delta}\subset\Omega$. 
Select $\epsilon(\delta)=d(\delta)/2>0$ and let $\zeta_\epsilon(x)$ be a standard mollifying function: 
$\zeta_\epsilon(x)=\epsilon^{-d}\zeta(x/\epsilon)$, where $\zeta(z)\in C^\infty_0({\mathbb R}^d)$, 
$\zeta(-z)=\zeta(z)$, 
$\mbox{supp }\zeta(z) \subset B(0,1)$ and $\int_{{\mathbb R}^d}\zeta(z)dz=1$. 
Consider the $x$-smoothed function 
\[
u_\delta(x,y)\,:=\,\zeta_\epsilon*\hat u_\delta(x,y)\,:=\,\int_{{\mathbb R}^d}\zeta_\epsilon(x-x')\hat u_\delta(x',y)dx'. 
\]
Obviously, by the construction and Lemma \ref{lemmactild}, $u_\delta(x,y)\in \tilde C^\infty_0(\Omega; V)\,\subset\, U$. 

We argue that $u_\delta \to u$ in $U$ as $\delta \to 0$. 
According to \eqref{qUnorm}, \eqref{limform} and \eqref{toper} it suffices to show that $u_\delta \to u$ in 
$L^2(\Omega; V)$ and $Tu_\delta \to Tu$ in $L^2(\Omega; W)$. 

The former assertion immediately follows from the fact that $\hat u_\delta \to u$ in $L^2(\Omega; V)$, 
cf. e.g. \cite{evans}, and from $\left\|u_\delta-\hat u_\delta\right\|_{L^2(\Omega; V)} \to 0$ (trivially 
established via e.g. changing variables $\hat x=x/(1-\delta)$, noticing that $\epsilon \to 0$ as 
$\delta \to 0$ and using the properties of the mollifications, cf. e.g. \cite{evans}).  

2. To prove that $Tu_\delta \to Tu$, 
%we notice first that $\hat u_\delta\in U$ and $T\hat u_\delta(x,y)=(1-\delta)Tu(x/(1-\delta),y)=:\hat\zeta_\delta(x,y)$ as 
%directly verified from \eqref{pfl341} via change of variables $\hat x=x/(1-\delta)$. 
%Next, for a chosen 
choose $\Psi(x,y)=\eta(x)\psi(y)$, with $\eta\in C_c^\infty(\overline\Omega)$ and $\psi\in W$, cf. \eqref{pfl341}. 
%\in C^\infty(\Omega; V)$. 
Then, for the right hand side of \eqref{pfl341} with 
$u$ replaced by $u_\delta \in U$, 
\[
I(\delta)\,:=\,-\,\int_\Omega\int_Q
u_\delta(x,y)\,\cdot\,\mbox{div}_x\left(\,\left(a^{(1)}(y)\right)^{1/2}
\Psi(x,y)\,\right) \,dy\,dx\,=\, 
\]
\[
-\,\int_{\Omega%_{\delta/2}
}\int_Q
\left[\int_{\Omega_\delta}\zeta_\epsilon(x-x')\hat u_\delta(x',y)dx'\right] 
\,\cdot\,\mbox{div}_x\left(\,\left(a^{(1)}(y)\right)^{1/2}
\Psi(x,y)\,\right) \,dy\,dx\,=\,
\]
\begin{equation} 
-\,\int_{\Omega_{\delta}}\int_Q
\hat u_\delta(x',y)\,\cdot\,
I_\epsilon(x',y)\,dydx',  
\label{idel1}
\end{equation} 
where 
\[
I_\epsilon(x',y)\,:=\,\int_{\Omega}\zeta_\epsilon(x-x') 
\mbox{div}_x\left(\,\left(a^{(1)}(y)\right)^{1/2}
\Psi(x,y)\,\right) \,dx, 
\]
having interchanged above the orders of integration. 
Notice that for $x'\in\Omega_\delta$  
the integrand in  $I_\epsilon(x',y)$  is smooth and compactly supported 
in $\Omega$ in $x$. Hence, via integration by parts and straightforward manipulation, 
\begin{equation}
I_\epsilon(x',y)\,=\,
\mbox{div}_{x'}\left(\,\left(a^{(1)}(y)\right)^{1/2}
\hat\Psi_\delta(x',y)\,\right), 
\label{iepsdiv}
\end{equation} 
where 
\begin{equation}
\hat \Psi_\delta(x',y)\,:=\,\left(\zeta_\epsilon*\Psi\right)(x',y):= 
\int_{\Omega}\zeta_\epsilon(x^{''}-x') \Psi(x^{''},y)dx^{''}\,=\hat\eta_\delta(x')\psi(y), 
%\in C^\infty(\Omega_{\delta}; W). 
\label{psihdel}
\end{equation} 
with $\hat\eta_\delta=\zeta_\ep*\eta\in C_c^\infty(\overline\Omega)$. 
Changing in \eqref{idel1}--\eqref{iepsdiv} the integration variable ($x=x'/(1-\delta)$), 
and introducing 
$\Psi_\delta(x,y):= (1-\delta)^{d-1} 
\hat \Psi_\delta\left((1-\delta)x,y\right)\,=\eta_\delta(x)\psi(y)$, 
$\eta_\delta(x):=(1-\delta)^{d-1}\hat\eta_\delta((1-\delta)x)\in C_c^\infty(\overline\Omega)$,      %   \in C^\infty(\Omega; W)$, 
results in 
\[
I(\delta)\,=\,-\,\int_\Omega\int_Q
u(x,y)\,\cdot\,\mbox{div}_x\left(\,\left(a^{(1)}(y)\right)^{1/2}
\Psi_\delta(x,y)\,\right) \,dy\,dx,  
\]
which reproduces the right hand side of \eqref{pfl341} for $\Psi$ replaced by $\Psi_\delta$. 
Hence, applying  
\eqref{pfl341} to $u\in U$ and  $\Psi_\delta=\eta_\delta(x)\psi(y)$ 
(recalling $\eta_\delta\in C^\infty_c(\overline\Omega)$ and $\eta\in W$), 
 %\in C^\infty(\Omega_{\delta}; W)$ 
results in 
\begin{equation} 
I(\delta)\,=\,\int_\Omega\int_Q \xi(x,y)\,\cdot\,\Psi_\delta(x,y)\,dy\,dx\,=\, 
\int_\Omega\int_Q \xi_\delta(x,y)\,\cdot\,\Psi(x,y)\,dy\,dx,  
\label{idel2}
\end{equation}
where, via \eqref{psihdel}, and a further change of integration variables, 
\begin{equation}
\xi_\delta(x,y)\,:=\,
(1-\delta)^{-1}
\int_{\Omega_\delta}\zeta_\epsilon(x-x')\xi\left(x'/(1-\delta),y\right)dx'.
\label{xidel}
\end{equation} 
By the uniqueness of $\xi$ in \eqref{pfl341} for $u$ replaced by $u_\delta\in U$, 
\eqref{idel2} yields 
$Tu_\delta=\xi_\delta$. 
It is now straightforward to check for $\xi_\delta$,  as given by \eqref{xidel}, 
that $\xi_\delta\to \xi=Tu$ in $L^2(\Omega; W)$ as $\delta\to 0$. 
Therefore $Tu_\delta \to Tu$ in $L^2(\Omega; W)$ as $\delta\to 0$, which completes the 
proof. 
\end{proof}

%\end{proof}

\begin{remark}\label{remepigr}
%The proof in the case of $\Omega={\mathbb R}^d$ can be done similarly to the above with $\hat u_\delta$ 
%replaced by multiplying $u$ by a suitable family of cut-off functions. 

%Moreover, 
Since all the arguments in the above proof have been {\it local in} $x$, using a suitable partition of 
unity in $x$ the proof can be extended to e.g. any domains which can be 
presented locally as either strictly star-shaped domains or (locally) 
epigraphs of arbitrary continuous functions. 

Indeed, given $u(x,y)\in U$ with associated $\xi(x,y)=Tu(x,y)\in L^2(\Omega;W)$  and 
$\chi(x)\in C_0^\infty({\mathbb R}^d)$, one can see that $\chi(x)u(x,y)\in U$ with associated 
$T(\chi(x)u(x,y))=\chi(x)\xi(x,y)+
P_W\left[\left(a^{(1)}(y)\right)^{1/2}u(x,y)\otimes\nabla\chi(x)\right]$, as found from \eqref{pfl341} by integration by parts. 

This observation allows employing 
the partition of unity. For local epigraphs of continuous functions, $x_d>f(x_1,..,x_{d-1})$, the above 
proof modifies in an obvious way by replacing the $(1-\delta)$-contractions of the star-shaped domain by 
simple $\delta$-translations in the positive $x_d$-direction. 
The proof in the case of 
%unbounded domains, in particular 
$\Omega={\mathbb R}^d$ can be done similarly to the above with $\hat u_\delta$ 
replaced by multiplying $u$ by a suitable family of cut-off functions, which can be combined with the above 
partition of unity arguments for extending the result to arbitrary bounded or unbounded domains with locally 
strictly star-shaped or `local-epigraph' boundaries. 
The routine details are omitted. 
\end{remark}

Lemmas \ref{lemmactild}(ii) and \ref{cinfdense} imply that the identity \eqref{limweak}, which can be 
rewritten via \eqref{limform} as $Q(u_0,\phi_0)+(\lambda-1)(u_0,\phi_0)_{H}=(f_0,\phi_0)_{H}$, where 
 $H:=L^2\left(\Omega; \left(L^2_\rho (Q)\right)^n\right)$ is Hilbert space with inner product
\beq
(u_1,u_2)_H\,=\,\int_{\Omega\times Q} \rho(y)u_1(x,y)\cdot\overline{u_2(x,y)}\, dx dy, 
\label{innerh}
\eeq
 holds for all 
$\phi_0\in U$. Further, the proof of Lemma \ref{qclosed} implies that, for any $\lambda>0$, the sesquilinear form determined by 
the left hand side of \eqref{limweak} is bounded and coercive in the Hilbert space $U$, on which the 
right hand side of \eqref{limweak} specifies a linear continuous functional on $U$. This implies by the 
Lax-Milgram lemma that, for any $f_0\in H$ 
%L^2\left(\Omega;\left(L^2_\#(Q)\right)^n\right)$, 
\eqref{limweak} 
has a unique solution $u_0(x,y)\in U$. The latter uniqueness in turn implies that the solutions $u^\ep(x)$ 
of the original problem \eqref{weak} weakly two-scale converge to $u_0(x,y)$, without the need for 
extracting a subsequence. 
These are key technical results of this work, with numerous implications, so we summarize that below 
as following theorem: 

\begin{theorem}\label{maintheor}
Let the assumptions \eqref{rhobd}--\eqref{coerc1}, as well as the key assumption \eqref{keyass}, hold. 
Then, for $\Omega$ from any of the above described classes, for any $\lambda>0$ and for any $f^\varepsilon \stackrel{2}\rightharpoonup f_0(x,y)\in L^2\left(\Omega; \left(L^2(Q)\right)^n\right)$, the unique solutions $u^\ep$ of \eqref{weak} %(possibly upon extracting a subsequence, 
weakly two-scale converge to $u_0(x,y)\in U$, $u^\varepsilon(x)\,\stackrel{2}\rightharpoonup\, u_0(x,y)$, which 
uniquely  satisfies the integral identity \eqref{limweak} for all $\phi_0\in U$. The associated generalized 
fluxes $\left(a^{(1)}(x/\eps)\right)^{1/2}\nabla u^\ep(x)$ weakly two-scale converge to $\xi_0(x,y)=Tu_0(x,y)$ 
as defined by \eqref{pfl34}. 
\end{theorem} 

As we show below, 
%$u_0(x,y)$ is in fact a unique solution of the limit problem \eqref{limweak} and hence 
%in the above in fact $u^\varepsilon&\stackrel{2}\rightharpoonup& u_0(x,y)$ for the original sequence, i.e. 
%without need to pass to a subsequence. 
this will further imply the weak and strong (pseudo-)resolvent 
convergences of the operators, with further implications for convergence of related semigroups and 
associated time-dependent Cauchy problems, and for certain spectral convergence.

\section{The two-scale limit operator and the resolvent convergence}\label{2sclop}

\subsection{The limit operator} 

The above construction defines, in a standard way, a self-adjoint two-scale limit operator $A_0$ in Hilbert space $H_0$ defined as the closure of $U$ in 
the Hilbert space $H=L^2\left(\Omega; \left(L^2_\rho (Q)\right)^n\right)$. 
% with inner product
%\[
%(u_1,u_2)_H\,=\,\int_{\Omega\times Q} \rho(y)u_1(x,y)\cdot\overline{u_2(x,y)} dx dy. 
%\]
Indeed, due to Lemma \ref{qclosed}, the non-negative symmetric sesquilinear form $Q(u,w)$ given on $U\times U$ by \eqref{limform} is 
closed and densely defined in $H_0$. Hence it defines a self-adjoint operator $A_0$ in $H_0$ 
with a dense domain $D(A_0)\subset U$: 
\beq
D(A_0)=\left\{u(x,y)\in U\,:\, \exists \mbox{ (unique)}\, w=:A_0u\in H_0\,\mbox{such that}\, \beta(u,v)=(w,v)_H,\, \ \forall v\in U \right\},  
\label{da0}
\eeq
where, cf. \eqref{limform}, 
\beq
\beta(u,v):=\int_\Omega\int_Q\left\{ 
T u(x,y)\,
\cdot 
\overline{T v(x,y)}\,
\,+\,a^{(0)}(y)\nabla_yu(x,y)\cdot\overline{\nabla_y v(x,y)}\,\right\} \,dx\,dy. 
\label{beta}
\eeq 

This fully determines $A_0$ in the general case under the key assumption \eqref{keyass} and $\Omega$ as 
in Remark \ref{remepigr}. The above general 
description of the limit operator $A_0$ may need to be specialized to be made more explicit for particular examples: see e.g. 
\cite{cooperPhD, Cooper14, CKS14, CC15} where such a specialization was performed for some of the examples in 
Section \ref{keyassexamples}. 

Loosely, e.g. assuming sufficient regularity of $u(x,y)$ as well as of $a^{(1)}(y)$, $a^{(0)}(y)$ and 
$\rho(y)$ 
or in an 
appropriate distribution sense, $A_0u$ may be interpreted as follows. As, cf. \eqref{da0}, for $u\in D(A_0)$, 
$A_0u\in H_0$ with $(A_0u,v)_H=\beta(u,v)$ for all $v\in U$, from \eqref{innerh} and \eqref{beta}, 
\[
\int_\Omega\int_Q A_0u(x,y)\cdot \rho(y)\overline{v(x,y)}\,dx\,dy\,=\,
\]
\[
\ \ \ \ \ \ = 
\int_\Omega\int_Q\left\{ 
T u(x,y)\,
\cdot 
\overline{T v(x,y)}\,
\,+\,a^{(0)}(y)\nabla_yu(x,y)\cdot\overline{\nabla_y v(x,y)}\,\right\} \,dx\,dy.
%\label{Auint}
\]
Therefore, formally integrating by parts, 
\[
(A_0u)(x,y)\,=\,%\rho^{-1}(y)
P\left[T^*Tu\,-\,\rho^{-1}(y)\mbox{div}_y \left( a^{(0)}(y)\nabla_yu\,\right)\,\right], 
\]
where $T^*:L^2(\Omega;W)\to H_0$ is the adjoint of $T$ and $P$ is the orthogonal projector from $H$ to $H_0$ (with respect to the 
inner product \eqref{innerh}). Further, for regular enough functions, $T$ can be represented via 
\eqref{xi0form}, and from \eqref{pfl341}, 
\[
(T^*\Psi)(x,y)\,=\,-\,P\left[\rho^{-1}(y)
\mbox{div}_x\left(\,\left(a^{(1)}(y)\right)^{1/2}
%\rho(y) 
\Psi(x,y)\,\right) dxdy
\right]. 
\]
As a result, we arrive at the following (formal) representation for the two-scale limit operator $A_0$: 
\[
(A_0u)(x,y)\,=\,%\rho^{-1}(y)
-\,P\left[\,\rho^{-1}(y)\mbox{div}_x\left(\left(a^{(1)}(y)\right)^{1/2}%\rho(y)
P_W \left[\left(a^{(1)}(y)\right)^{1/2}\nabla_xu(x,y)\right]\right)\,\,+\right.
\]
\[
\left.
\ \ \ \ \ \ \ \ \ \ \ \ \ 
\rho^{-1}(y)\mbox{div}_y \left( a^{(0)}(y)\nabla_yu\right)\right].  
\]
Here $P_W$ is the standard $L^2$-orthogonal projector on the space $W$ of admissible micro-fluxes, see \eqref{wspace}, 
which can be constructed, cf \eqref{xi0form}, via solving the `generalized' corrector problem: 
\[
\mbox{div}_y\left(\,a^{(1)}(y)\left[ \nabla_xu(x,y)+\nabla_yu_1(x,y)\right]\right)\,=\,0.  
\]
%$P_V=$ is the orthogonal projector %in $L^2$ on 
%on the set $V$ of admissible micro-fields, \eqref{herset}.  

\subsection{The weak two-scale resolvent convergence} 

Recall that for $u^\ep$ solving the original problem \eqref{weak}, equivalently \eqref{pde}, it can be 
written as $u^\ep=\left(A_\ep+\lambda I\right)^{-1}f^\ep$, see \eqref{resolveps},  where $A_\ep$ is non-negative 
self-adjoint operator in Hilbert space $H_\ep=\left(L^2(\Omega)\right)^n$ equipped with inner product 
$(u,v)_{H_\ep}:=\int_\Omega u(x)\cdot\rho^\ep(x)v(x)dx$. Further, the limit weak formulation \eqref{limweak} 
 is equivalently recast via \eqref{beta} and \eqref{innerh} as 
\[
\beta(u_0,\phi_0)\,+\,\lambda (u_0,\phi_0)_H\,=\,(f_0,\phi_0)_H, \ \ \ \mbox{for all } \phi_0\in U,  
\]
for a given $f_0\in H$. This immediately implies, cf. \eqref{da0}, that for the unique solution $u_0$ 
of \eqref{limweak}, 
$u_0\in D(A_0)$ and $A_0u_0+\lambda u_0=Pf_0$ where $P$ is the above introduced orthogonal projector from $H$ 
on $H_0$. Therefore, $u_0=(A_0+\lambda I)^{-1}Pf_0$ and so 
Theorem \ref{maintheor} can be immediately re-stated as follows.  

\begin{corollary}\label{w2src2}
Under the assumptions \eqref{rhobd}--\eqref{coerc1} and \eqref{keyass} and for any $\Omega$ as in Remark 
\ref{remepigr}, 
let $f^\varepsilon \stackrel{2}\rightharpoonup f_0(x,y)\in H=L^2\left(\Omega; \left(L^2(Q)\right)^n\right)$. 
Then, for all $\lambda>0$, 
\beq
u^\ep=\left(A_\ep+\lambda I\right)^{-1}f^\ep\,\,\stackrel{2}\rightharpoonup \,\,\left(A_0+\lambda I\right)^{-1}Pf_0, 
\ \ \mbox{as $\ep\to 0$}. %, 
\label{w2src3}
\eeq
The associated generalized 
fluxes $\left(a^{(1)}(x/\eps)\right)^{1/2}\nabla u^\ep(x)$ weakly two-scale converge to $\xi_0(x,y)=Tu_0(x,y)$ 
as defined by \eqref{pfl34}. 
%where $P$ is the orthogonal projector from $H$ to $H_0$. 
\end{corollary} 
The corollary can be interpreted as a weak two-scale (pseudo-)resolvent convergence, see e.g. \cite{Zhikov2000,
ZhP07,Past05}: the resolvents acting on weakly two-scale convergent sequences weakly two-scale converge to the 
resolvent of the limit operator acting in the orthogonal projection of $f_0$ on $H_0$. 

The latter has further important implications as discussed in the next section. 

\section{Implications of the weak resolvent convergence}\label{w2scimpl} 

Corollary \ref{w2src2} has a number of implications, valid under some abstract assumptions, see \cite{ZhP07,Past05} which include our general case. We state some of these implications below, providing some brief comments. 
We notice first that taking the weak and strong two-scale convergences defined in \eqref{w2scdef} and 
\eqref{s2scdef} as abstract weak and strong convergences of elements of $H_\ep$ to elements of $H$, is easily 
checked to be consistent with the  Definition 1.1 and assumption (1.1) of \cite{Past05}. 

In the rest of this section we assume that \eqref{rhobd}--\eqref{coerc1} and \eqref{keyass} hold, as well as
that $\Omega$ is as in Remark \ref{remepigr}. 

1. {\it Strong two-scale (pseudo-)resolvent convergence.} 
It can be easily checked directly using \eqref{s2scdef} and the self-adjointness of $(A_\ep+\lambda I)^{-1}$ 
and $(A_0+\lambda I)^{-1}$ in $H_\ep$ and $H_0$ respectively cf. 
%following e.g. 
\cite{Zhikov2000}, and was also proved in generality in e.g. 
\cite{Past05} Lemmas 2.4 and 2.5, that Corollary \ref{w2src2} implies analogous {\it strong} two-scale 
(pseudo-)resolvent convergence. This has further important implications and we state this as the following theorem. 
\begin{theorem}\label{s2src} 
%Under the assumptions \eqref{rhobd}--\eqref{coerc1} and \eqref{keyass}, if 
If 
$f^\varepsilon(x) \stackrel{2}\rightarrow f_0(x,y)\in H=L^2\left(\Omega; \left(L^2(Q)\right)^n\right)$,  
then, for all $\lambda>0$, 
\beq
u^\ep=\left(A_\ep+\lambda I\right)^{-1}f^\ep\,\,\stackrel{2}\rightarrow \,\,\left(A_0+\lambda I\right)^{-1}Pf_0, 
\ \ \mbox{as $\ep\to 0$}. 
\label{s2src2}
\eeq
The associated generalized 
fluxes $\xi^\ep(x)=\left(a^{(1)}(x/\eps)\right)^{1/2}\nabla u^\ep(x)$ also {\it strongly} two-scale converge to $\xi_0(x,y)=Tu_0(x,y)$ 
as defined by \eqref{pfl34}. 
\end{theorem} 
Notice that, additionally to \cite{Past05}, the above theorem also includes the strong two-scale convergence of the generalized fluxes. Trying to be concise here, 
the latter can be inferred by first setting in \eqref{weak} $\phi=u^\ep$ and recalling that its both sides converge, 
as $\ep\to 0$, to \eqref{limweak} with $\phi_0$ replaced by $u_0$. Then we notice that the last terms on the 
left and the right hand sides of \eqref{weak} converge to those of \eqref{limweak} since 
$u^\varepsilon(x) \stackrel{2}\rightarrow u_0(x,y)\in H_0$. Further, for non-negative (cf \eqref{coerc1})  variational 
functional $I_\ep(u):=\int_\Omega 
\ep^2 \left(a^{(0)}\left({x}/{\varepsilon}\right)  + a^{(1)}\left(x/{\varepsilon}\right)\right)  
{\nabla} u(x)\cdot
\nabla u(x) dx$, $u\in \left(H_0^1(\Omega)\right)^n$, 
a two-scale weak lower semicontinuity property holds: if 
$u^\varepsilon(x) \stackrel{2}\rightharpoonup u_0(x,y)\in L^2\left(\Omega;\left(H^1_\#(Q)\right)^n\right)=:V_0$ 
then $\liminf_{\ep\to 0} I_\ep(u^\ep) \geq I_0(u_0)$, where 
$I_0(u_0):=\int_{\Omega \times Q}
\left(a^{(0)}\left(y\right)  + a^{(1)}\left(y\right)\right)  
{\nabla_y} u_0(x,y)\cdot
\nabla_y u(x,y) dx dy$. The latter can be shown by, for example, adjusting the argument of Zhikov in 
\S 2.3 (iii)  of \cite{Zhikov2000} to $0\leq I_\ep(u_\ep(x)-\Phi_k(x,x/\ep))$ with $\Phi_k(x,y)$ a linear 
combination of $\phi_i(x)b_i(y)$, and then choosing $\Phi_k(x,y)\to u_0(x,y)$ in 
$V_0$ (e.g. choosing as $\Phi_k$ the truncated Fourier series of $u_0(x,y)$ in $Q$-periodic $y$). 
Together with \eqref{weaknormstrong} for the first terms in \eqref{weak} and \eqref{limweak}, which are 
respectively $\|\xi^\ep(x)\|^2_2$ and $\|Tu_0(x,y)\|^2_2$ (since 
$\xi^\varepsilon \stackrel{2}\rightharpoonup Tu_0(x,y)$ and hence by \eqref{liminf} a priori 
$\liminf_{\ep\to 0}\|\xi^\ep(x)\|^2_2\geq \|Tu_0(x,y)\|^2_2$), this implies 
$\xi^\varepsilon \stackrel{2}\rightarrow Tu_0(x,y)$ as claimed. Notice that the argument 
also implies that in fact $\lim_{\ep\to 0} I_\ep(u^\ep) = I_0(u_0)$, resulting in turn  
in $\varepsilon \nabla u^\varepsilon  \stackrel{2}\rightarrow \nabla_y u_0(x,y)$, cf. \eqref{2sc2}. 
\vspace{.1in}

2. {\it Partial convergence of spectra.} 
Let $\mbox{Sp} A_\ep$ and  $\mbox{Sp} A_0$ be the spectra of the self-adjoint operators $A_\ep$ and $A_0$, 
respectively. Then, as discussed e.g. in \cite{Zhikov2000} and shown in abstract generality in 
\cite{Past05} Theorem 8.1, the strong two-scale resolvent convergence of the above Theorem \ref{s2src} 
automatically implies a ``part'' of the Hausdorff convergence of the spectra, namely 
\begin{corollary}\label{h1spec}
For any $\mu_0\in \mbox{Sp} A_0$ there exist $\mu_\ep\in \mbox{Sp} A_\ep$ such that $\mu_\ep\to\mu_0$. 
\end{corollary} 
Therefore any point on the spectrum of the limit operator $A_0$ for small enough $\ep$ is approximated 
by points in the spectrum of $A_\ep$. 

The ``converse'' part of the Hausdorff convergence, i.e. that $\mu^\ep\to\mu_0$, $\mu^\ep\in \mbox{Sp} A_\ep$ 
implies $\mu_0\in\mbox{Sp} A_0$ does not generally 
hold, see e.g. \cite{CKS14}\footnote{As clarified e.g. in \cite{CKS14,cooperPhD},  
for $\Omega={\mathbb R}^d$ this 
corresponds to non-vanishing contributions to the limit Floquet-Bloch spectrum as $\ep\to 0$ from 
the quasi-periodicity parameter (quasi-momentum) $\theta\neq 0$, for which the present two-scale description restricted to periodic functions ($\theta=0$) appears insufficient.}. It does hold however in a 
number of important examples, see e.g. \cite{Zhikov2000,Zhikov2005,ZhP13,Cooper14}, which has then to be proved by separate means and sometimes allows to establish the existence of band gaps in the spectrum of $A_\ep$ for small enough $\ep$. 

3. {\it Strong convergence of spectral projectors.} 
As again discussed in e.g. \cite{Zhikov2000}, and then shown in an abstract generality in \cite{Past05} 
Theorem 8.4, the strong two-scale resolvent convergence of Theorem \ref{s2src} implies also the convergence of 
spectral projectors. Denote $E_\ep(\lambda)$ and $E_0(\lambda)$ the spectral projectors of the non-negative 
self-adjoint operators $A_\ep$ and 
$A_0$ respectively, i.e. for their spectral decompositions: 
\beq
A_\ep\,=\,\int_0^\infty \lambda \,\, dE_\ep(\lambda), \ \ \ \ 
A_0\,=\,\int_0^\infty \lambda \,\,  dE_0(\lambda). 
\label{spectheor}
\eeq 
Then 
\begin{corollary}\label{specproj}
If $\lambda$ is not an eigenvalue of $A_0$, then $E_\ep(\lambda)f^\ep(x)\,\stwoscale\,E_0(\lambda)f_0(x,y)$ as long as $f^{\ep}(x)\stwoscale f_0(x,y)\in H_0$. 
\end{corollary}

4. {\it Convergence of semigroups and convergence of Cauchy problems for %parabolic 
time-dependent initial value problems.} 
As again discussed in \cite{Zhikov2000} and then shown in abstract generality in \cite{ZhP07} and in 
\cite{Past05}, the strong (equivalently weak) two scale (pseudo-)resolvent convergence akin to that in the above 
Theorem \ref{s2src} implies appropriate two-scale convergence of associated semigroups as well as of related 
evolution Cauchy problems with time-independent coefficients. 
The reader is referred to \cite{Past05} for an abstract account of some scenarios for such 
convergences, most of which can be %directly specialized 
specialized to our case. We state below a couple of particular 
results from \cite{Past05}, as adapted and extended to our problem. 

The non-negative self-adjoint operators $A_\ep$ and $A_0$ in the respective Hilbert spaces $H_\ep$ and 
$H_0$ %, according to Hille-Yosida theorem (see e.g. \cite{evans}\S7.4.2), 
generate strongly continuous 
contraction semigroups, denoted $\left(S_\ep(t)\right)_{t\geq 0}=e^{-tA_\ep}$ and 
$\left(S_0(t)\right)_{t\geq 0}=e^{-t A_0}$. 

The following theorem results from e.g. specializing Theorem 1.4 of \cite{Past05}, which is turn a 
modification of Trotter-Kato theorem for variable Banach spaces cf. \cite{ZhP07},  to our setting. 
\begin{theorem}\label{parsem}
%Under the assumptions \eqref{rhobd}--\eqref{coerc1} and \eqref{keyass}, let  
%$f^{\ep}\stwoscale f_0(x,y)\in H_0$. Then, 

(i) The strongly continuous contraction semigroups $S_\ep(t)$ associated with 
%the non-negative self-adjoint operator 
$A_\ep$ strongly two-scale 
converge pointwise in $t$ to the semigroup $S_0(t)$ associated with $A_0$, i.e. 
if $f^{\ep}(x)\stwoscale f_0(x,y)\in H_0$ then 
for all $t\geq 0$,  
\[
e^{-A_\ep t} f^\ep(x) \stwoscale e^{-A_0 t} f_0(x,y).
\]
(ii) Hence, given $T>0$, for   parabolic Cauchy problem 
\beq
\rho^\ep(x)\frac{\partial u^\ep}{\partial t}\,-\,\mbox{div}\left( a^\ep(x)\nabla u^\ep\right)\,=\,0, 
\ \ \ u^\ep(x,0)= f^\ep(x)\in \left(L^2(\Omega)\right)^n, 
\label{pareps}
\eeq
if $f^{\ep}(x)\stwoscale f_0(x,y)\in H_0$,  then for the (unique) % weak) 
solution 
$u^\ep$,  %\in L^2\left(0,T;H_0^1(\Omega)\right)$ with 
%$\frac{\partial u}{\partial t}\in  L^2\left(0,T;H^{-1}(\Omega)\right)$, for all $t\geq 0$, 
$u^\ep(x,t) \stwoscale u_0(x,y,t)$ for all $t\geq 0$. 
Here $u_0(x,y,t)$ %\in L^2\left(0,T;U\right)$ with 
%*$\frac{\partial u}{\partial t}\in  L^2\left(0,T;U^*\right)$, $U^*$ denoting the dual space to $U$ as defined by 
%\eqref{pfl341} with norm \eqref{qUnorm}, 
is the (unique) % weak) 
solution of two-scale limit Cauchy 
problem: 
\beq
\frac{\partial u_0}{\partial t}\,+\,A_0u_0\,=\,0, \ \ u_0(x,y,0)= f_0(x,y). 
\label{parlim}
\eeq
\end{theorem}
%\footnote{%As a natural extension of the standard theory, see e.g. \cite{evans} \S7.1.1, the weak 
Solutions of both Cauchy problems \eqref{pareps} and \eqref{parlim}
 can be a priori understood as strong solutions. 
For example, seek $u^\ep\in C([0,T];H_\ep)$ and $u_0\in C([0,T];H_0)$ respectively, 
with the 
above initial conditions and 
such that 
for all $t>0$, $u^\ep\in D(A_\ep)$ and $u_0\in D(A_0)$ and %hence they 
have strong derivatives in $t$ 
with values in 
$H_\ep$ and $H_0$, %(e.g. \cite{evans} \S7.4.1 Theorem 1), 
and $\frac{du^\ep}{dt}+A_\ep u^\ep=0$ and $\frac{du_0}{dt}+A_0 u_0=0$ for all $t>0$. Then $u^\ep(t)=e^{-tA_\ep}f^\ep$ and $u_0(t)=e^{-tA_0}f_0$ are readily checked to be solutions, and the  
uniqueness follows in a standard way from the non-negativity of $A_\ep$ and $A_0$. 
%satisfying 
%$\langle\partial u_0/\partial t,\,v\rangle+\beta(u_0,v)=0$ for each $v\in U$ and a.e. $0\leq t\leq T$, 
%and $u_0(x,y,0)=f_0(x,y)$. If however $f_0\in D(A_0)$, then $u_0(t)$ as a function with values in $H_0$ 
%is continuously differentiable, $u_0(t)\in D(A_0)$ and \eqref{parlim} held pointwise in $t$. 
%}

Notice that $u^\ep$ and $u_0$ can also be viewed as appropriate (unique) weak solutions of \eqref{pareps} and \eqref{parlim}. For example, cf. e.g. \cite{lions}, seek 
$u^\ep(x,t)\in L^2\left(0,T;\left(H_0^1(\Omega)\right)^n\right)\cap 
C\left([0,T]; \left(L^2(\Omega)\right)^n\right)$ with 
$\frac{\partial u}{\partial t}\in  L^2\left(0,T;H^{-1}(\Omega)\right)$, such that 
\beq
\langle\partial u^\ep/\partial t,\,v\rangle+\beta_\ep(u^\ep,v)=0
\label{weakpar}
\eeq
 for each 
$v\in \left(H_0^1(\Omega)\right)^n$ and a.e. $0\leq t\leq T$, where 
$
\beta_\ep(u,v):=\int_\Omega a^\ep(x)\nabla u\cdot \nabla v \,dx, 
$
with initial condition $u(x,0)=f^\ep(x)$. 
Then $u^\ep(x)=e^{-t A_\ep}f^\ep$ is readily seen to be the unique solution. 

This would allow a further refinement of Theorem \ref{parsem} to include (strong two-scale) convergence of 
the generalized fluxes 
$\xi^\ep(x,t):=\left(a^{(1)}(x/\ep)\right)^{1/2}\nabla u^\ep(x,t)$. 
%, which is a natural generalization of the weak $H^1$-convergence of $u^\ep$ in the classical homogenization. 
This can be shown by first noticing that by setting in \eqref{weakpar} $u=v=u^\ep$ 
and recalling the $L^2$-boundedness of $f^\eps$ %and the energy conservation for \eqref{hyperbeps}, 
implies that $\xi^\ep(x)$ is uniformly 
bounded in $L^2(0,T;H_\ep)$, and hence (cf e.g. \cite{Past05} Lemma 4.4), up to a subsequence, 
$\xi^\ep(x,t) \stackrel{2}\rightharpoonup \xi_0(x,y,t)$ in $L^2(0,T;H)$. Selecting then in 
\eqref{weak} the test functions $v$ first as in the proof of 
Lemma \ref{lem22} we infer that $\xi_0\in L^2\left(0,T; L^2(\Omega; W)\right)$, and then as in the proof of Lemma \ref{fluxcorr} we conclude that $\xi_0(x,y,t)=Tu_0(x,y,t)$. Finally, we can show that in fact 
$\xi^\ep(x,t) \stackrel{2}\rightarrow \xi_0(x,y,t)$ following similar argument after Theorem 
\ref{s2src}. 

So, as Theorem \ref{parsem} implies, for the above generalization of a double porosity-type parabolic Cauchy problem
(cf e.g. \cite{Khrus, Zhikov2000}), 
%Khruslov \& Co 1990s; Zhikov 2000): The 
the limit problem \eqref{parlim} can be derived under most general assumptions \eqref{rhobd}--\eqref{coerc1} and \eqref{keyass}. 
%, and may generally give 
%macroscopic (multi-phase) `flows' coupled by not only temporal nonlocality (= memory) but also a \underline{spatial} one. 
%\end{frame}

We emphasize that the condition that the two-scale limit of the Cauchy data $f_0(x,y)$ is in the subspace $H_0$ 
of $H=\left(L^2(\Omega\times Q)\right)^n$ but not in the whole of $H$ is important for Theorem \ref{parsem} to hold. If this 
condition is not met, the convergence to 
$u_0=e^{-tA_0}Pf_0$ (i.e. with $f_0$ replaced by its projection $Pf_0$ on $H_0$) 
would generally hold only in a weak sense and only `on the average' with 
respect to $t$, cf \cite{Past05} Theorem 1.6 and \cite{ZhP07} Theorem 2. 
\vspace{.15in}

Finally, following again \cite{Past05}, we provide a scenario ensuring 
 convergence of associated {\it hyperbolic} semigroups, with implications for two-scale homogenization of 
high-contrast hyperbolic problems. Consider the hyperbolic Cauchy problem
\beq
\rho^\ep(x)\frac{\partial^2 u^\ep}{\partial t^2}\,-\,\mbox{div}\left( a^\ep(x)\nabla u^\ep\right)\,=\,0, \ u^\ep(x,0)= f^\ep(x), 
\ \frac{\partial u^\ep}{\partial t}(x,0)= g^\ep(x),   
\label{hyperbeps}
\eeq
with initial data $f^\ep\in \left(H_0^1(\Omega)\right)^n$ and $
g^\ep\in \left(L^2(\Omega)\right)^n$. % and the unique (weak) solution 
%$u^\ep(x,t)$ in an appropriate functional class (see e.g. \cite{evans} \S7.2.1). 
For every $\ep>0$ and $T>0$, the Cauchy problem \eqref{hyperbeps} is well-posed, cf e.g. \cite{lions}, for 
$u^\ep(x,t)\in C\left([0,T];\left(H_0^1(\Omega)\right)^n\right)$%\cap C([0,T]; \left(L^2(\Omega)\right)^n$ 
with 
$\frac{\partial u}{\partial t}\in  C\left([0,T];\left(L^2(\Omega)\right)^n\right)$ and 
$\frac{\partial^2 u}{\partial t^2}\in  L^2\left(0,T;\left(H^{-1}(\Omega)\right)^n\right)$, such that 
\beq
\langle\partial^2 u^\ep/\partial t^2,\,v\rangle+\beta_\ep(u^\ep,v)=0
\label{weakpar3}
\eeq
 for each 
$v\in \left(H_0^1(\Omega)\right)^n$ and a.e. $0\leq t\leq T$. 
It is then routinely checked, referring to the spectral representation \eqref{spectheor} for  $A_\ep$,  that the unique solution of \eqref{hyperbeps} is 
\beq
u_\ep(x,t)\,=\,\cos\left(A_\ep^{1/2}t\right)f^\ep\,+\,
\frac{\sin\left(A_\ep^{1/2}t\right)}{A_\ep^{1/2}}g^\ep. 
\label{uepspectr}
\eeq
The Cauchy problem \eqref{hyperbeps}
can be interpreted in terms of a 
contraction semigroup on $\left(H^1_0(\Omega)\right)^n\times \left(L^2(\Omega)\right)^n$, cf. e.g. 
\cite{evans} \S 7.4.3 b. 
 Then, according to Theorem 5.2 of \cite{Past05}, a version of the Trotter-Kato theorem holds 
ensuring a weak two-scale convergence of related hyperbolic semigroups. Complementary or alternatively, 
one could exploit the self-adjointness and the non-negativeness of $A_\ep$ and $A_0$, cf \eqref{hyperbeps} and 
\eqref{uepspectr} and reduce the 
problem to that of a `Stone's unitary group'.  

Adapting e.g. Theorem 5.3 of \cite{Past05} to our case, we state the following theorem.  
\begin{theorem}\label{hyperbcauchy}
Let  
 $f^{\ep}\stackrel{2}\rightharpoonup f_0(x,y)\,\in U $, 
 $g^{\ep}\stackrel{2}\rightharpoonup g_0(x,y)\in H $, 
and let 
\beq
\limsup_{\varepsilon \to 0} \int_\Omega %\left[ 
\,a^\ep (x)\nabla f^\ep\cdot \nabla f^\ep(x)\,
%+ \, \rho^\ep(x) f^\ep\cdot f^\ep(x)\,\right]\, 
dx \,<\,\infty.  
%\beta(f_0,f_0)\,+\,\|f_0\|^2_{L^2}, 
\label{energbounded}
\eeq
%, cf \cite{evans} \S 7.4.3b. 
%In application to the Cauchy problem \eqref{hyperbeps} this implies that, 
Then 
for each $T>0$, for the solution $u^\ep(x,t)$ to the Cauchy problem \eqref{hyperbeps}, 
$u^\ep(x,t) \stackrel{2}\rightharpoonup u_0(x,y,t)$, 
$\left(a^{(1)}(x/\ep)\right)^{1/2}\nabla u^\ep(x,t) \stackrel{2}\rightharpoonup T u_0(x,y,t)$, 
$\frac{\partial u^\ep}{\partial t}(x,t) \stackrel{2}\rightharpoonup 
\frac{\partial u_0}{\partial t}(x,y,t)$ in $L^2(0,T;H_\ep)$\footnote{  
According to Definition 4.3 of \cite{Past05}, for a bounded sequence $v_\ep\in L^2(0,T;H_\ep)$ 
we say that $v^\ep(x,t) \stackrel{2}\rightharpoonup v(x,y,t)\in L^2(0,T;H)$ if 
for any $z^\ep(x)\stackrel{2}\rightarrow z(x,y)$ and any $\varphi(t)\in L^2(0,T)$, 
\[
\int_0^T\left(v^\ep(x,t), z^\ep(x)\right)_{H_\ep}\varphi(t)dt\,\to\, 
\int_0^T\left(v(x,y,t), z(x,y,t)\right)_{H}\varphi(t)dt. 
\]
}
where $u_0$ is the unique solution of two-scale 
Cauchy problem in $H_0$: 
\beq
\frac{\partial^2 u_0}{\partial t^2}\,+\,A_0u_0\,=\,0, \ \ u_0(x,y,0)= f_0(x,y), \  \ 
\frac{\partial u_0}{\partial t}(x,y,0)= Pg_0(x,y). 
\label{hyperblim}
\eeq
\end{theorem}
The limit Cauchy problem \eqref{hyperblim} is well-posed, cf \cite{lions}, for 
$u_0(x,t)\in C\left([0,T];U\right)$ %\cap C([0,T]; \left(L^2(\Omega)\right)^n$ 
with 
$\frac{\partial u}{\partial t}\in  C\left([0,T];H_0\right)$ and 
$\frac{\partial^2 u}{\partial t^2}\in  L^2\left(0,T;U^*\right)$, (where $U^*$ denotes the dual space to $U$), such that 
\beq
\langle\partial^2 u_0/\partial t^2,\,v\rangle+\beta(u_0,v)=0
\label{weakpar2}
\eeq
 for each 
$v\in U$ and a.e. $0\leq t\leq T$. 

Then, according to the spectral representation \eqref{spectheor} for  $A_0$,  the unique solution of \eqref{hyperbeps} is readily seen to be 
\beq
u_0(x,t)\,=\,\cos\left(A_0^{1/2}t\right)f_0\,+\,
\frac{\sin\left(A_0^{1/2}t\right)}{A_0^{1/2}}Pg_0. 
\label{u0pectr}
\eeq

Note that, compared to the abstract Theorem 5.3 of \cite{Past05}, we have again stated here a further refinement 
specific to at least our general class of the problems: on the weak two-scale convergence of the generalized fluxes 
$\xi^\ep(x,t):=\left(a^{(1)}(x/\ep)\right)^{1/2}\nabla u^\ep(x,t)$, which is a natural generalization of the weak $H^1$-convergence of $u^\ep$ in the classical homogenization.   Indeed  
 by \eqref{energbounded}, the $L^2$-boundedness of $g^\eps$ and the energy conservation for \eqref{hyperbeps}, $\xi^\ep(x)$ is uniformly 
bounded in $L^2(0,T;H_\ep)$, and hence the convergence can be shown to hold via 
a straightforward modification of the proof of a similar convergence for parabolic problem as outlined below 
Theorem \ref{parsem}.

%adapting the argument of \cite{Past05} accordingly. 

One can similarly adopt Theorem 7.2 of \cite{Past05} to establish a sufficient condition 
on the initial data $f^\ep(x)$ and $g^\ep(x)$ 
for appropriate strong 
(pointwise in $t$) convergence of $u^\ep(x,t)$ to $u_0(x,t)$. 
%Avoiding going into tedious technical details, it appears sufficient to require that 
%$g^{\ep}(x)\stackrel{2}\rightarrow g_0(x,y)\in H_0 $, 
%$f^{\ep}(x)\stackrel{2}\rightarrow f_0(x,y)\in U $ and 
%$\left(a^{(1)}(x/\eps)\right)^{1/2}\nabla f^\ep(x) \rightarrow Tf_0(x,y)$, 
%i.e. the first initial data would need to include an appropriate corrector. 

\vspace{.1in} 

The above implications may be interpreted as follows. Under generic assumptions on the degeneracy 
$a^{(1)}(y)$, notably under the key decomposition assumption \eqref{keyass} together with the original 
assumptions \eqref{rhobd}--\eqref{coerc1}, for a wide class of domains $\Omega$ (Remark \ref{remepigr}) the limit resolvent problem as well as the limit 
parabolic and hyperbolic Cauchy problems retain the two-scale pattern of respectively the right hand side 
and of the Cauchy data. That is in contrast with the spectral problem (see the discussion in following 
Corollary \ref{h1spec} above), 
which may generally retain a quasi-periodic pattern in the limit and which may hence need to be reflected 
by appropriately extending the limit operator, unless some additional conditions are imposed. 
The latter may deserve a separate investigation, as well as whether the cases where \eqref{keyass} is not satisfied e.g. the examples of high anisotropy in \cite{CSZ06, VPS09} can also be treated generally, possibly by 
combining the presented ideas with those based on convergence with respect to measures \cite{Zhikov2000}. 
The latter approach has indeed proved working in \cite{CSZ06} where \eqref{keyass} is not satisfied. 
It may also be of interest to investigate general properties of the limit operator $A_0$ and of associated 
two-scale coupled limit problems, and in particular under what conditions the scales could be uncoupled, in one 
or another way.

\end{document}